\newtheorem{theorem}{Theorem}[section]
\newtheorem{lemma}[theorem]{Lemma}
\newtheorem{proposition}[theorem]{Proposition}
\theoremstyle{definition}
\newcommand{\footremember}[2]{%
    \footnote{#2}
    \newcounter{#1}
    \setcounter{#1}{\value{footnote}}%
}
\newcommand{\footrecall}[1]{%
    \footnotemark[\value{#1}]%
}
\newcommand\xqed[1]{%
  \leavevmode\unskip\penalty9999 \hbox{}\nobreak\hfill
  \quad\hbox{#1}}
\theoremstyle{definition}
\newtheorem{xdefinition}[theorem]{Definition}
\newenvironment{definition}{\begin{xdefinition}}{\xqed{$\triangle$}\end{xdefinition}}
\theoremstyle{remark}
\newtheorem{xremark}[theorem]{Remark}
\newenvironment{remark}{\begin{xremark}}{\xqed{$\triangle$}\end{xremark}}
\newtheorem{xexample}[theorem]{Example}
\newenvironment{example}{\begin{xexample}}{\xqed{$\triangle$}\end{xexample}}
\newcommand{\Spec}{\operatorname{Spec}}
\newcommand{\jvRe}{\operatorname{Re}}
\newcommand{\ii}{i}
\newcommand{\dd}{d}
\newcommand{\ee}{e} % I have the habit of using non-italicized d, e, and i but I can change with a macro.
\newcommand{\BigO}{\mathcal{O}}
\theoremstyle{definition}
\title{Discussing  semigroup bounds with resolvent estimates}
\author{%
  B.~Helffer\footremember{Nantes}{Nantes Université, Laboratoire de
Mathématiques Jean Leray, LMJL, F-44000 Nantes, France. Bernard.Helffer@univ-nantes.fr / Joseph.Viola@univ-nantes.fr}%
  \and J.~Sj\"ostrand\footremember{Bourgogne}{Institut de Mathématiques de Bourgogne, UMR 5584 CNRS, Universite Bourgogne Franche-Comte,
F21000 Dijon Cedex France. Johannes.Sjostrand@u-bourgogne.fr}%
  \and J.~Viola\footrecall{Nantes}
  }
\date{\today}
\begin{document}

\bibliographystyle{plain}

\maketitle

\begin{abstract}
The purpose of this paper is to revisit the proof of the
Gearhart-Pr\"uss-Huang-Greiner theorem for a semigroup $S(t)$, following the
general idea of the proofs that we have seen in the literature and to
get an explicit estimate on the operator norm of $S(t)$ in terms of bounds on the resolvent of the generator. In \cite{HelSj} by the first two authors, this was done and some applications in semiclassical analysis were given. Some of these results have been subsequently published in two books written by the two first authors \cite{He1,Sj2}. A second work \cite{HeSj21} by the first two authors presents new improvements partially motivated by a paper of D.~Wei \cite{W}.

In this third paper, we continue the discussion on whether the aforementioned results are optimal, and whether one can improve these results through iteration. Numerical computations will illustrate some of the abstract results.
\end{abstract}
\vskip2cm\noindent
{\small {\bf 2020 Mathematics Subject Classification.--} 47D03, 44A10, 49K99.} 
\par\smallskip\noindent
{\small {\bf Key words and phrases.--} Semigroup, resolvent, optimal
  bounds, Riccati equation.} 
\newpage 

 \tableofcontents 
\newpage 

\section{Review of some recent results}

\label{int}
\subsection{Introduction}
We start by recalling quantitative versions  of the
Gearhart-Pr\"uss-Huang-Greiner theorem obtained since 2010 (see  \cite{HelSj,He1,Sj,Sj2,HeSj21}).  We also mention more recent contributions which use or are connected with these results \cite{Ar,BT,Ei,EZ,LY,RV,R,Wa,Bou}.

Throughout, we let 
\[
	[0,+\infty [\ni
t\mapsto S(t)\in {\mathcal L}({\mathcal H},{\mathcal H})
\]
denote a strongly continuous semigroup of operators with $S(0) = I$ acting on some complex Hilbert space $\mathcal{H}$. The norm $\|S(t)\|$ will refer to the norm of $S(t)$ as an operator on $\mathcal{H}$, and $A$ will refer to the generator of $S(t)$, so that formally $S(t)=\exp tA$. Recall (cf.~\cite[Chapter II]{EnNa07} or \cite{Paz}) that $A$ is closed and densely defined. We let $D(A)$ denote the domain of definition of $A$. 

By the Banach-Steinhaus theorem, $\sup_J\Vert S(t)\Vert$ is bounded for every compact interval $J\subset [0,+\infty [$. Using the semigroup property it follows easily that there exist $M \ge 1$ and $\omega_0 \in \mathbb{R}$ such that $S(t)$ has the property 
\begin{equation}\label{int.1}
P(M,\omega_0 ):\quad \Vert S(t)\Vert\le Me^{\omega_0 t},\ t\ge 0.
\end{equation}

We also recall (\cite[Theorem II.1.10]{EnNa07}) that 
\begin{equation}\label{int.2}
(z-A)^{-1}=\int_0^\infty S(t)e^{-tz}dt,\quad \Vert (z-A)^{-1}\Vert \le
\frac{M}{\Re z-\omega_0}\,,
\end{equation}
when $P(M,\omega_0 )$ holds and $z$ belongs to the open half-plane $\Re z> \omega_0 $.

\par We now recall the Gearhart-Pr\"uss-Huang-Greiner theorem, see
\cite[Theorem V.I.11]{EnNa07} or \cite[Theorem 19.1]{TrEm}:
\begin{theorem}\label{int1}~
\begin{enumerate}[(a)]
\item\label{it:GP1} Assume that $\Vert (z-A)^{-1}\Vert$ is uniformly bounded in the
half-plane $\Re z\ge \omega $. Then there exists a constant $M>0$ such
that $P(M,\omega )$ holds.
\item If $P(M,\omega )$ holds, then for every $\alpha >\omega $, $\Vert
(z-A)^{-1}\Vert$ is uniformly bounded in the half-plane $\Re z\ge
\alpha $.
\end{enumerate}
\end{theorem}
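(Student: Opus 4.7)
\textbf{Part (b)} is immediate from the Laplace representation \eqref{int.2}: under $P(M,\omega)$ and $\Re z \ge \alpha > \omega$, one reads off $\|(z-A)^{-1}\| \le M/(\Re z - \omega) \le M/(\alpha - \omega)$, uniformly on the closed half-plane $\{\Re z \ge \alpha\}$.

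\textbf{Part (a)} is the substantive direction. By replacing $S(t)$ with $e^{-\omega t}S(t)$, whose generator is $A-\omega$, I may assume $\omega = 0$, so that the hypothesis reads: $(z-A)^{-1}$ is holomorphic and uniformly bounded by some $M_0$ on $\{\Re z \ge 0\}$, and the goal becomes $\sup_{t \ge 0}\|S(t)\| < \infty$. My strategy is to recover $S(t)$ from the resolvent via a Bromwich inversion formula and then exploit the resolvent bound on the imaginary axis by deforming the contour there.

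Concretely, for $u \in D(A^2)$ and any $c > \omega_0$, I would first establish the Bromwich representation
\[
S(t)u = \frac{1}{2\pi i}\int_{c-i\infty}^{c+i\infty} e^{tz}(z-A)^{-1}u\,dz,
\]
obtained by Fourier-Plancherel inversion applied to $e^{-ct}S(t)u$ together with \eqref{int.2}. Absolute convergence is secured by iterating the identity $(z-A)^{-1}u = u/z + z^{-1}(z-A)^{-1}Au$ to expose the $O(|z|^{-2})$ remainder
\[
(z-A)^{-1}u = \frac{u}{z} + \frac{Au}{z^2} + \frac{(z-A)^{-1}A^2u}{z^2}.
\]
Using the holomorphy of $(z-A)^{-1}$ on $\Re z \ge 0$ together with its uniform boundedness in the intermediate strip $0 \le \Re z \le c$, Cauchy's theorem allows me to push the contour onto $\Re z = 0$. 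On that line $|e^{tz}|=1$, and the decomposition above yields an auxiliary estimate
\[
\|S(t) u\| \le C\bigl(\|u\| + \|Au\| + \|A^2 u\|\bigr), \qquad t \ge 0,
\]
with $C$ independent of $t$.

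The main obstacle is upgrading this auxiliary inequality to the genuine operator-norm bound $\|S(t)\| \le M$, since the right-hand side above involves the graph norm in $D(A^2)$ rather than $\|u\|_{\mathcal{H}}$ alone. To close this gap I would apply Plancherel to the scalar pairing $t \mapsto \langle S(t)u, v\rangle$, whose Laplace transform on $\Re z = \eta > 0$ equals the uniformly bounded quantity $\langle(\eta + i\tau - A)^{-1}u, v\rangle$, and then combine this with the semigroup identity $S(t) = S(t/2)S(t/2)$ and a density argument in $u, v \in D(A^2)$. This is the step where the Hilbert space structure of $\mathcal{H}$ plays an essential role through Plancherel, and it is also the reason why the analogous statement is known to fail on general Banach spaces.
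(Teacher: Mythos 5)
Your Part (b) is correct and is the standard argument from \eqref{int.2}.

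For Part (a), you should first note that the present paper does not prove Theorem \ref{int1} at all: it states it with references to Engel--Nagel and Trefethen--Embree, and then proves a quantitative strengthening (Theorem \ref{int2}, established in \cite{HelSj,HeSj21}) whose method is Plancherel applied to temporal \emph{cutoffs} of $t\mapsto S(t)u$, with no Laplace inversion or contour deformation anywhere. Your Bromwich-inversion strategy is therefore genuinely different from the route the paper follows; it is closer to the classical proofs in the textbooks the paper cites.

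Within your strategy there are two issues, one minor and one substantial. Minor: the expansion $(z-A)^{-1}u = u/z + Au/z^2 + (z-A)^{-1}A^2u/z^2$ creates artificial poles at $z=0$, precisely on the target contour; one must expand around a point in the resolvent set of $A$, e.g.\ use $(z-A)^{-1}u = (z+1)^{-1}u + (z+1)^{-2}(A+1)u + (z+1)^{-2}(z-A)^{-1}(A+1)^2u$, so that the contour shift to $\Re z = 0$ picks up residues only at $z=-1$. Substantial: the ``upgrade'' step is the crux of the Gearhart--Prüss theorem and you have only named the ingredients, not assembled them. The graph-norm estimate $\|S(t)u\|\le C(\|u\|+\|Au\|+\|A^2u\|)$ does not extend by density to $\|S(t)\|\le M$; a graph-norm bound says nothing about the operator norm on $\mathcal{H}$. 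And the Plancherel identity $\int_0^\infty e^{-2\eta t}|\langle S(t)u,v\rangle|^2\,dt = \frac{1}{2\pi}\int_{\mathbb{R}}|\langle(\eta+i\tau-A)^{-1}u,v\rangle|^2\,d\tau$ is of no use with only the uniform bound $M_0\|u\|\|v\|$ on the integrand, since that is not $\tau$-integrable; to make the right-hand side finite you are again forced back to regularity of $u$ and hence a graph-norm bound. The actual completion in Prüss/Huang (and, in a different form, in Helffer--Sjöstrand) is the step you have skipped: either (i) a resolvent identity comparing $(\eta+i\tau-A)^{-1}$ for small $\eta>0$ to $(\omega+i\tau-A)^{-1}$ for some $\omega>\omega_0$ where the $L^2_\tau$ norm is already controlled by $P(M,\omega_0)$, so that the Plancherel bound becomes $\|u\|$-only; or (ii) the cutoff argument used for Theorem \ref{int2}, in which one Fourier-transforms $\chi(t)S(t)u$ for a compactly supported $\chi$ vanishing at $0$, applies the resolvent bound to $(\partial_t-A)(\chi S(\cdot)u) = \chi' S(\cdot)u$, and uses Cauchy--Schwarz on $\langle S(t)u,v\rangle = \langle S(s)u, S(t-s)^*v\rangle$ integrated over $s$. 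Either of these routes is where the Hilbert-space structure is truly exploited. As written, your proof of Part (a) stops before the genuinely hard part.
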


The purpose of \cite{HelSj} and \cite{HeSj21} was to revisit the proof of \ref{it:GP1} by getting an explicit $t$-dependent estimate on $e^{-\omega t}\Vert S(t) \Vert$, implying explicit bounds on $M$. 

To state the relevant results, we introduce a quantity $r(\omega)$ bounding $\|(z-A)^{-1}\|$ in the half-plane $\{\Re z \ge \omega\}$.
\begin{definition}
\begin{equation}\label{def:r}
	r(\omega) = \left(\sup_{\jvRe z > \omega}\|(z - A)^{-1}\|\right)^{-1},
\end{equation}
with the usual conventions that if $z \in \Spec A$ then $\|(z-A)^{-1}\| = +\infty$ and that formally $\frac{1}{+\infty} = 0$. 
\end{definition}

Clearly $r(\omega) \geq 0$ and $r(\omega)$ is increasing. We define
\begin{equation}\label{eq:omega_1}
	\omega_1 = \inf\{\omega \in \Bbb{R} \::\: r(\omega) > 0\}.
\end{equation}

With the triangle inequality we can easily show that  for every $\omega \in ]\omega_1,\infty [$, we have $\omega
-r(\omega )\ge \omega _1$ and for $\omega '\in [\omega -r(\omega
),\omega ]$ we have 
\begin{equation}\label{eq:monot}
r(\omega ')\ge r(\omega )-(\omega -\omega ').
\end{equation}

Note that in \cite{HelSj} (Remark 1.4), sufficient conditions are given to obtain that the $\sup$ appearing in the definition of $\omega$ is attained on $\jvRe z =\omega$.

The results discussed in this work are of the following form.

\begin{quote}
{\it Given a semigroup $S(t)$ of generator $A$, $\omega$ such that $r(\omega) >0$  and a  function $m:[0, +\infty[ \to [0, +\infty[$  such that
\[
	\|S(t)\| \leq m(t), \quad \forall t \geq 0,
\]
and given a value $r \leq r(\omega)$, one can obtain an updated upper bound $U(m, \omega, r) : [0, +\infty[ \to [0, +\infty[$ such that
\[
	\|S(t)\| \leq U(m, \omega, r)(t) \leq m(t), \quad \forall t \geq 0.
\]
}
\end{quote}

\begin{example}
Theorem \ref{th3.2} below, taken from \cite{W}, is equivalent to saying that one may take with $\omega=0$, $r = r(0)$ and $m \equiv 1$
\[
	U(1, 0, r)(t) = \begin{cases} 1, & 0 \leq t \leq \frac{\pi}{2r}, \\ \exp(\frac{\pi}{2} - rt), & t > \frac{\pi}{2r}.\end{cases}
\] 
That is, if $\|S(t)\| \leq 1$ for $t \geq 0$ and if the generator of $S(t)$ satisfies $r(0) = r$ as defined in Definition \ref{def:r}, then $\|S(t)\| \leq U(1, 0, r)(t)$ for all $t \geq 0$.

In Section \ref{Section3} we show that this upper bound is optimal for $0 \leq t \leq \frac{\pi}{2r}$. However, for any $t_0 > \frac{\pi}{2r}$ we do not know of an example of a semigroup with $S(t) = \exp(tA)$ with $-A$ an $m$-accretive operator on a Hilbert space such that $\|S(t_0)\| = \exp(\frac{\pi}{2} - rt_0)$.
\end{example}

\subsection{The main theorem in \cite{HelSj} and discussion on connected results}
We recall from \cite[Theorem 1.7]{HeSj21} the following improvement of the main result in \cite{HelSj}.

\begin{theorem}\label{int2}
Suppose $\omega \in \Bbb{R}$ is such that $r(\omega)$ defined in \eqref{def:r} is strictly positive.
Let $m(t): [0, +\infty[ \to ]0, +\infty[$ be a continuous positive function such that 
\begin{equation}\label{eq:h1}
\|S(t)\| \leq m(t) \mbox{ for all } t \geq 0\,.
\end{equation}
Then for all $t,a,b>0$ such that $t \geq  a+b$,
\begin{equation}\label{int.4str}  \Vert S(t)\Vert \le \frac{e^{\omega t - r(\omega) (t-a-b)} }{r(\omega )\Vert
    \frac{1}{m}\Vert_{e^{-\omega \cdot }L^2(]0,a[)}\Vert
    \frac{1}{m}\Vert_{e^{-\omega \cdot }L^2(]0,b[)}},
    \end{equation}
where for $c > 0$
\[
	\|f\|_{e^{-\omega\cdot}L^2(]0, c[)}^2 = \int_0^c |f(t)|^2 e^{2\omega t}\,\dd t.
\]
\end{theorem}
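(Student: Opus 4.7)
The plan is to prove the bound by combining the Laplace transform, Plancherel's identity, and a carefully optimized time cutoff. Write $C_a = \|1/m\|_{e^{-\omega\cdot}L^2(]0,a[)}$ and $C_b = \|1/m\|_{e^{-\omega\cdot}L^2(]0,b[)}$ for brevity.

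Fix $u \in \mathcal{H}$ with $\|u\| \leq 1$ and aim to bound $\|S(t) u\|$; the estimate on $\|S(t)\|$ then follows by taking the supremum over $u$. For a smooth cutoff $\chi$ compactly supported in $[0,+\infty[$ with $\chi(0) = 0$, set $w(\sigma) = \chi(\sigma) S(\sigma) u$: this satisfies $(\partial_\sigma - A) w = \chi'(\sigma) S(\sigma) u$. Taking the Laplace transform at $z = \omega + i\eta$ yields $\widehat{w}(z) = (z-A)^{-1}\, \widehat{\chi' S u}(z)$; combining the resolvent estimate $\|(z-A)^{-1}\| \leq r(\omega)^{-1}$ on $\Re z = \omega$ with Plancherel's identity gives the fundamental inequality
\[
\int_0^{+\infty} |\chi(\sigma)|^2 \|S(\sigma) u\|^2 e^{-2\omega \sigma}\, d\sigma \leq \frac{1}{r(\omega)^2} \int_0^{+\infty} |\chi'(\sigma)|^2 \|S(\sigma) u\|^2 e^{-2\omega\sigma}\, d\sigma.
\]

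The remainder of the argument is a matter of choosing $\chi$ judiciously. To extract $\|S(t) u\|$ on the left, one uses the reverse pointwise inequality $\|S(\sigma) u\| \geq \|S(t) u\|/m(t - \sigma)$, obtained from $S(t) = S(t-\sigma) S(\sigma)$ and $\|S(t - \sigma)\| \leq m(t - \sigma)$. To extract the prefactors $C_a^{-1}$ and $C_b^{-1}$ on the right, one uses $\|S(\sigma) u\| \leq m(\sigma)$ together with Cauchy--Schwarz: minimizing $\int_0^a |\chi'(\sigma)|^2 m(\sigma)^2 e^{-2\omega\sigma}\, d\sigma$ subject to $\chi(0) = 0$, $\chi(a) = 1$ yields exactly $C_a^{-2}$, achieved by the optimal profile $\chi'(\sigma) \propto e^{2\omega\sigma}/m(\sigma)^2$; a symmetric minimization on $[t - b, t]$ (after the change of variable $s = t - \sigma$, so that $m$ is evaluated on $[0, b]$ and the weight $e^{-2\omega\sigma}$ becomes $e^{-2\omega t} e^{2\omega s}$) yields $C_b^{-2}$. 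The natural shape of $\chi$ is therefore one that rises from $0$ to $1$ on $[0, a]$, stays near $1$ on $[a, t - b]$, and falls back to $0$ on $[t - b, t]$.

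The principal obstacle is the decay factor $e^{-r(\omega)(t - a - b)}$. I expect this to follow from repeating the Plancherel step on the shifted vertical line $\Re z = \omega - \tilde r$ for $\tilde r \in (0, r(\omega))$: by the monotonicity \eqref{eq:monot}, $r(\omega - \tilde r) \geq r(\omega) - \tilde r > 0$, so the inequality persists with constant $(r(\omega) - \tilde r)^{-2}$ and weight $e^{-2(\omega - \tilde r)\sigma}$. Carefully separating the contributions of the two transition intervals from the central plateau (of length $t - a - b$) shows that the extra weight $e^{2\tilde r \sigma}$ produces the improvement $e^{-\tilde r(t - a - b)}$ on the plateau, while the transition intervals still yield $C_a$ and $C_b$ at the unshifted $\omega$. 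Taking $\tilde r \nearrow r(\omega)$ (or optimizing over $\tilde r$) then produces the announced exponential decay; the delicate bookkeeping lies precisely in arranging the cutoff $\chi$ so that the shifted weight contributes only on the plateau, while the original weight governs the transitions and hence the definitions of $C_a$ and $C_b$.
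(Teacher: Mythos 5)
The paper does not actually give a proof of Theorem~\ref{int2}: it is recalled from \cite[Theorem~1.7]{HeSj21}, and only the applications and further developments are carried out here. Still, the structure of the argument in \cite{HelSj,HeSj21} is well documented, and your proposal diverges from it at a decisive point.

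Your outline is right as far as the first, ``energy'' half of the argument goes: introduce $v(\sigma)=S(\sigma)u$, a cutoff $\chi$ with $\chi(0)=0$, write $(\partial_\sigma - A)(\chi v)=\chi' v$, Laplace transform on $\jvRe z=\omega$, and use $\|(z-A)^{-1}\|\le 1/r(\omega)$ with Plancherel. The Cauchy--Schwarz optimization giving $\inf_\chi\int_0^a|\chi'|^2m^2e^{-2\omega\sigma}\,d\sigma=C_a^{-2}$ (with $C_a=\|\tfrac1m\|_{e^{-\omega\cdot}L^2(]0,a[)}$) is also exactly the step used in \cite{HelSj}. The problem is what you do with the other endpoint.

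You use a single cutoff that rises on $[0,a]$ to~$1$, plateaus on $[a,t-b]$, and \emph{falls back to $0$ on $[t-b,t]$}, and you intend to extract $C_b^{-2}$ from the second transition via Cauchy--Schwarz on the right-hand side, ``symmetric'' to the $[0,a]$ transition. Two things go wrong. First, with $\chi'$ supported on the union $[0,a]\cup[t-b,t]$, the right-hand side of your Plancherel inequality is a \emph{sum} of two contributions, so after optimizing over $\chi$ you obtain at best something of the form $C_a^{-2}+D$, not the product $C_a^{-2}C_b^{-2}$ that the statement requires. Second, the change of variable $s=t-\sigma$ on the $[t-b,t]$ transition replaces the weight $m(\sigma)$ with $m(t-s)$, i.e.\ it evaluates $m$ on $[t-b,t]$, not on $[0,b]$, so the minimization there produces $\bigl(\int_{t-b}^t e^{2\omega\sigma}m(\sigma)^{-2}\,d\sigma\bigr)^{-1}$, which is not $C_b^{-2}$. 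In the actual argument the factor $C_b^{-1}$ does \emph{not} come from a second falling transition at all. The cutoff only rises; the $b$--side information is extracted on the left-hand side, using $\|S(\sigma)u\|\ge\|S(t)u\|/m(t-\sigma)$ on $[a,t]$ and the change of variable $s=t-\sigma$, which correctly lands $m$ on $[0,t-a]\supset[0,b]$ and gives $\|S(t)u\|^2e^{-2\omega t}C_b^2$ as a lower bound of the left side. (Equivalently one can run a second, independent ``forward'' estimate for the adjoint semigroup $S^*$; either way there is no falling cutoff.) With only a rising cutoff, this reproduces the weaker estimate \eqref{int.4} cleanly.

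For the exponential gain $e^{-r(\omega)(t-a-b)}$ the idea of shifting the contour to $\jvRe z=\omega-\tilde r$ and invoking \eqref{eq:monot} to keep a nontrivial resolvent bound is certainly in the spirit of \cite{HeSj21}, but the bookkeeping you sketch does not close. Running the rising-cutoff argument with weight $e^{-2(\omega-\tilde r)\sigma}$ (and then bounding $e^{\mp2\tilde r\sigma}$ by its extreme values on $[0,a]$ and $[0,b]$) gives
\[
\|S(t)u\|\ \le\ \frac{e^{\omega t-\tilde r(t-a-b)}}{\bigl(r(\omega)-\tilde r\bigr)\,C_aC_b},
\qquad 0\le \tilde r<r(\omega),
\]
and the infimum over $\tilde r$ of $e^{-\tilde r L}/(r-\tilde r)$ with $L=t-a-b$ is $e\,L\,e^{-rL}$ (for $L>1/r$), which is strictly larger than the target $e^{-rL}/r$. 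So the naive contour shift plus optimization does not produce the constant $1/r(\omega)$ in front of the exponential; a genuinely different mechanism is used in \cite{HeSj21}. You should not expect to recover \eqref{int.4str} by the scheme as written.
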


We now present some applications of this theorem with comparisons with the existing literature.

\begin{enumerate}
\item
The proof of this theorem (together with applications) was first presented in \cite{HelSj} and later published in the books \cite{He1,Sj2}. The advantage of this result, compared with prior works, is that all the constants are explicit. The version given in \cite{HelSj} has only the weaker statement that, for $t \geq a+b$,
\begin{equation}\label{int.4}  \Vert S(t)\Vert \le \frac{e^{\omega t } }{r(\omega )\Vert
    \frac{1}{m}\Vert_{e^{-\omega \cdot }L^2(]0,a[)}\Vert
    \frac{1}{m}\Vert_{e^{-\omega \cdot }L^2(]0,b[)}}\,.
\end{equation}
\item As observed in \cite{HelSj}, one can vary $\omega > \omega_1$ when some information on the behavior of $r(\omega)$ as $\omega \rightarrow \omega_1$ is given. For example, we get from \eqref{int.4} by considering $\omega=\omega_1+ \frac 1 t$, the inequality for $t$ large enough:
\begin{equation}\label{int.4bis}  \Vert S(t)\Vert \le e \frac{e^{\omega_1 t}} {r(\omega_1+1/t )\Vert
    \frac{1}{m}\Vert_{e^{-\omega \cdot }L^2(]0,a[)}\Vert
    \frac{1}{m}\Vert_{e^{-\omega \cdot }L^2(]0,b[)}}\,.
    \end{equation}
Various particular cases have been considered in the literature \cite{R,Wa,EZ,Ei},  often with worse constants:
    \begin{itemize}
    \item Assuming for example that $m(t)=M$ on $[0,a]$, $a = b > 0$ and $\omega_1=0$, we get, as stated in \cite{RV} for $t\geq 2a$
    \begin{equation}\label{int.4ter}  \Vert S(t)\Vert \le \frac{e M^2}{a}  \frac{1} {r(1/t)}\,.
    \end{equation}
    \item A particular attention is given to the case of semi-group satisfying the so-called $\alpha$-Kreiss-condition. This corresponds to the case when $\omega_1\leq 0$ and 
    $$
    r(\omega) \geq \frac{1}{C_\alpha} \omega^{\alpha}\,,\, \omega \in ]0,\hat \omega_0]\,.
    $$
    When $\alpha =1$, the estimates can be improved using a Ces\'aro averaging method (see \cite{Ar,Bou}) and one can gain a factor $\frac{1}{\sqrt{\log t}}$ for $t$ large. This improvement does not seem accessible using the techniques of \cite{HelSj} or \cite{HeSj21}.
    
As known (see \cite{Ei}, Remark 1.22, p.~90)) the case when $\alpha < 1$ implies that the semi-group is exponentially stable, meaning that there exists $L$ and $\omega' <0$ such that the semi-group satisfies $P(L,\omega')$. This is a consequence of \eqref{eq:monot} which implies that $\omega_1<0$ with $\omega_1$ from \eqref{eq:omega_1}.
\item Let us give three consequences of \eqref{int.4str} which are better estimates than what one could obtain from \eqref{int.4}.
     \begin{enumerate}
     \item If for some $\omega >0$, we have $\omega < r(\omega)$, the semi-group is exponentially stable and we can measure through \eqref{int.4str} its asymptotic decay as $t \rightarrow +\infty$.
    \item If for some $\omega > 0$, we have $\omega\leq r(\omega)$, the semi-group is bounded. This is a weak form of the Hille-Yosida Theorem but under much weaker
     assumptions. 
    \item If for some $\omega_0 >0$, $\ell >1$ and $C>0$, we have
    $$
   0> r(\omega) -\omega \geq - C \omega^\ell\,,\, \forall \omega \in ]0,\omega_0]\,,
    $$
    then there exists $\widehat C >0$ such that
    $$
    || S(t) || \leq \widehat C\, (1 + t^{1/\ell})\,.
    $$
      For this statement, we just apply \eqref{int.4str} with $\omega = t^{-1/\ell}$ and $t\geq t_0$.
    \end{enumerate}
    
     \end{itemize} 
     \item The Banach case (with some $L^p$ version) has been considered  in \cite{LY} and later in \cite{Hel22}. In this case $\frac{1}{r(\omega)}$ has to be replaced by the norm in $\mathcal L (L^p_\omega(\mathbb R_+;\mathcal H))$
     of  the operator $\mathcal K^+$ defined by
$$
(\mathcal K ^+ u)(t)=\int_0^t   S(t-s) u(s) ds\,,\, \mbox{ for } t\geq 0\,.
$$
Here $L^p_\omega(\mathbb R_+;\mathcal H) := e^{\omega \cdot} L^p(\mathbb R_+;\mathcal H)$.
\item The paper \cite{Wa} contains many results and is posterior to \cite{Ar} and \cite{BT}. In the spirit of  its Proposition 3.5 (who gives a nice elegant proof), we can get
 with a better constant and with the same proof as for Theorem \ref{int2} the following extension:
\begin{theorem}\label{int2Wa}~\\
Let $C \in \mathcal L (D(A),\mathcal H)$ be such that $S(t)$ commutes with $C$ for all $t\geq 0$. 
Suppose $\omega \in \Bbb{R}$ is such that
$$
r_C(\omega)^{-1}:= \sup_{\Re \lambda \geq \omega} || C R(\lambda,A)|| < +\infty \,.
$$
Let $m(t): [0, +\infty[ \to ]0, +\infty[$ be a continuous positive function such that 
\begin{equation}\label{eq:h1new}
\|S(t)\| \leq m(t) \mbox{ for all } t \geq 0\,.
\end{equation}
Then the operator extends to a bounded operator on $\mathcal H$ for all $t>0$ and 
 for all $t,a,b>0$ such that $t \geq  a+b$, we have
\begin{equation}\label{int.4new}  \Vert C  S(t)\Vert \le \frac{e^{\omega t}} {r_C(\omega )\Vert
    \frac{1}{m}\Vert_{e^{-\omega \cdot }L^2(]0,a[)}\Vert
    \frac{1}{m}\Vert_{e^{-\omega \cdot }L^2(]0,b[)}}\,.
    \end{equation}
\end{theorem}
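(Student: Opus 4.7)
My plan is to mimic the proof of Theorem \ref{int2} from \cite{HeSj21} almost verbatim, inserting $C$ at the single place where the resolvent enters and exploiting the commutation to move $C$ freely past any $S(s)$. In the original argument, the key resolvent bound $\|R(\lambda,A)\|\le r(\omega)^{-1}$ is applied on the vertical line $\Re\lambda=\omega$; here I would instead use
\[
\|CR(\lambda,A)\|_{\mathcal{L}(\mathcal{H})} \le r_C(\omega)^{-1}, \qquad \Re\lambda \ge \omega.
\]
This is meaningful because $R(\lambda,A)\in\mathcal{L}(\mathcal{H},D(A))$ and $C\in\mathcal{L}(D(A),\mathcal{H})$, so $CR(\lambda,A)\in\mathcal{L}(\mathcal{H})$ with the bound read off directly from the hypothesis. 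The commutation $CS(t)=S(t)C$ on $D(A)$ also transmits to $CR(\lambda,A)=R(\lambda,A)C$ on $D(A)$ via the integral representation of the resolvent.

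For the heart of the argument, fix $u\in D(A)$ (we will extend by density at the end) and $v\in\mathcal{H}$. For $t\ge a+b$ and every $(s,s')\in[0,a]\times[0,b]$, write
\[
\langle CS(t)u, v\rangle = \langle CS(t-s-s')\, S(s)u,\, S(s')^{*}v\rangle,
\]
where the semigroup law was combined with the commutation to place $C$ on the middle factor. Following the structure of \cite{HeSj21}, I would represent $CS(\tau)$ for $\tau=t-s-s'\ge 0$ by an appropriately truncated inverse-Laplace integral on the contour $\Re\lambda=\omega$, with integrand $e^{\lambda\tau}CR(\lambda,A)$. Multiplying the displayed identity by $1/(m(s)\,m(s'))$, integrating over $[0,a]\times[0,b]$, and invoking Parseval once in the $s$ variable and once in the $s'$ variable, the weights $1/m$ assemble into exactly $\|1/m\|_{e^{-\omega\cdot}L^{2}(]0,a[)}$ and $\|1/m\|_{e^{-\omega\cdot}L^{2}(]0,b[)}$, the displayed resolvent bound contributes $r_C(\omega)^{-1}$, and the shift of contour from $\Re\lambda=0$ to $\Re\lambda=\omega$ produces the factor $e^{\omega t}$.

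The one bookkeeping point, rather than a genuine obstacle, is extending the inequality from $D(A)$ to all of $\mathcal{H}$: since the bound depends only on $\|u\|$ and $\|v\|$, density of $D(A)$ together with continuity of $S(t)$ lets us extend $CS(t)$ to a bounded operator on $\mathcal{H}$ for every $t>0$, with operator norm controlled by \eqref{int.4new}. I do not anticipate a genuinely new difficulty compared with the unperturbed Theorem \ref{int2}: the commutation hypothesis is precisely what makes $C$ play a passive role, simply replacing $r(\omega)^{-1}$ by $r_C(\omega)^{-1}$ at the one step where the resolvent is estimated.
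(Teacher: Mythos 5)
Your proposal is correct and takes exactly the approach the paper has in mind: the paper itself states that Theorem~\ref{int2Wa} is obtained ``with the same proof as for Theorem~\ref{int2},'' and the only modification---using the commutation $CS(t)=S(t)C$ to place $C$ next to the middle semigroup factor, so that the resolvent bound $\|CR(\lambda,A)\|\le r_C(\omega)^{-1}$ on $\Re\lambda=\omega$ replaces $\|R(\lambda,A)\|\le r(\omega)^{-1}$ at the single place where the resolvent is estimated---is precisely what you identify. One minor bookkeeping slip in your sketch: the averaging weight in the Helffer--Sj\"ostrand argument is proportional to $e^{2\omega s}/m(s)^2$ (equivalently, one optimizes over test functions via Cauchy--Schwarz), not $1/(m(s)m(s'))$ as written; with the latter the $L^2$ norms $\|1/m\|_{e^{-\omega\cdot}L^2}$ in the denominator would not emerge, but this is internal to the proof of Theorem~\ref{int2} that you are already deferring to.
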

The particular case when $C$ is a projector was considered in \cite[Theorem 1.6]{HelSj}.
\end{enumerate}
Finally note that another application of Theorem \ref{int2} will appear in Section~\ref{Section3}.

\subsection{Wei's theorem and the generalizations in \cite{HeSj21}}
\subsubsection{Wei's theorem} In \cite{W}, Dongyi Wei, motivated by  \cite{HelSj},  has proved the following theorem:
\begin{theorem}\label{th3.2}
 Let $H=-A$ be an $m$-accretive operator in a Hilbert space $\mathcal H$. Then we have
\begin{equation}\label{eq:w}
||S(t) || \leq e^{- r(0)t +\frac \pi 2}\,,\, \forall t\geq 0\,.
\end{equation}
\end{theorem}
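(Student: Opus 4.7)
By the Lumer-Phillips theorem the $m$-accretivity of $-A$ yields the contraction bound $\|S(t)\|\le 1$, so $m\equiv 1$ is a valid majorant in the sense of \eqref{eq:h1}. Write $r=r(0)$. On $0\le t\le \pi/(2r)$ the desired inequality $\|S(t)\|\le e^{\pi/2-rt}$ is automatic since the right-hand side is $\ge 1$; the content is to prove it for $t>\pi/(2r)$.

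A natural first attempt is to feed $\omega=0$, $m\equiv 1$, $a=b$ into Theorem \ref{int2}. Then $\|1/m\|_{L^2(]0,a[)}^2=a$, and the estimate reads $\|S(t)\|\le e^{-r(t-2a)}/(ra)$ for $t\ge 2a$; a one-variable minimization (optimum $a=1/(2r)$) gives
\[
\|S(t)\|\le 2e\cdot e^{-rt},\qquad t\ge 1/r.
\]
The exponential rate is already sharp, but $2e\approx 5.44$ exceeds $e^{\pi/2}\approx 4.81$, so Theorem \ref{int2} alone does not suffice.

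To reach the sharp constant I would follow Wei's Fourier/Plancherel approach, which crucially exploits the monotonicity of $t\mapsto\|S(t)x\|$ (valid because $-A$ is accretive) that Theorem \ref{int2} ignores. For $x\in D(A)$ set $u(t)=S(t)x$ for $t\ge 0$ (zero otherwise); for $\alpha>0$ the formula \eqref{int.2} identifies the Fourier transform of $e^{-\alpha t}u(t)$ with $(\alpha+i\xi-A)^{-1}x$, and Plancherel gives
\[
\int_0^\infty e^{-2\alpha t}\|u(t)\|^2\,dt=\frac{1}{2\pi}\int_{\mathbb R}\|(\alpha+i\xi-A)^{-1}x\|^2\,d\xi.
\]
Letting $\alpha\downarrow 0$, using that $r>0$ extends the resolvent continuously to $\Re z=0$ with $\|(i\xi-A)^{-1}\|\le 1/r$, and combining with a window that balances this uniform bound against the $O(1/|\xi|)$ decay of $(i\xi-A)^{-1}x$ for $x\in D(A)$, one converts the $L^2$-in-time identity into a pointwise bound via the monotonicity estimate $t\|u(t)\|^2\le \int_0^t\|u(s)\|^2\,ds$. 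The precise constant $\pi/2$ is expected to arise from an integral of the form $\int_0^\infty d\xi/(1+\xi^2)=\pi/2$ that appears when the window is chosen optimally.

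The principal obstacle is the constant bookkeeping: since $2e$ and $e^{\pi/2}$ are close, every inequality must be nearly tight, and the monotonicity of $\|u(t)\|$ must be invoked at exactly the right step in the argument. An alternative route, consistent with the iteration theme announced in the abstract, would be to feed the bound $2e\cdot e^{-rt}$ back into Theorem \ref{int2} as a new $m$ and iterate; showing that such an iteration converges to exactly $e^{\pi/2}\cdot e^{-rt}$ rather than to some slightly larger function would require a delicate fixed-point analysis that is far from automatic.
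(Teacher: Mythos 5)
Your opening reduction is correct: $\|S(t)\|\le 1$ by $m$-accretivity, and the estimate is vacuous for $t\le \pi/(2r)$. The computation that Theorem~\ref{int2} with $\omega=0$, $m\equiv 1$, and the optimal $a=b=1/(2r)$ gives only $\|S(t)\|\le 2e\,e^{-rt}$ is also right, and is precisely the observation in \cite{HeSj21} that motivated the Riccati improvement. Beyond that, however, the proposal is a sketch and not a proof: the Plancherel identity, the limit $\alpha\downarrow 0$, the choice of window, and the conversion of the $L^2$-in-time bound to a pointwise one via $t\|u(t)\|^2\le\int_0^t\|u(s)\|^2\,ds$ are described at the level of intent but none is carried out, and these are exactly the steps where Wei's argument in \cite{W} does the real work.

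The paper itself does not reprove Theorem~\ref{th3.2} from scratch; it cites \cite{W} and then, in Section~\ref{ss:m0is1}, derives \eqref{eq:w} as the special case $m_0\equiv 1$, $\omega=0$, $r=r(0)$ of Theorem~\ref{thm:Riccati}. That is the route you should take here, since Theorem~\ref{thm:Riccati} is available to you. With those data the function in \eqref{eq:def_mu} is $\mu\equiv 0$, so \eqref{eq:def_phi} becomes
\[
\phi'(t)=r\bigl(\phi(t)^2+1\bigr),\qquad\phi(0)=0,
\]
with solution $\phi(t)=\tan(rt)$, hence $a^*=\pi/(4r)$ by \eqref{eq:def_astar}. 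Then \eqref{eq:h5} gives, for $t\ge 2a^*=\pi/(2r)$,
\[
\|S(t)\|\le e^{(0-r)(t-\pi/(2r))}\,m_0(a^*)^2=e^{-rt+\pi/2},
\]
while for $t<\pi/(2r)$ the trivial bound $\|S(t)\|\le 1\le e^{-rt+\pi/2}$ applies. Your alternative suggestion of iterating Theorem~\ref{int2} toward $e^{\pi/2}e^{-rt}$ is close in spirit to the improvement actually carried out in \cite{HeSj21}, but, as you correctly caution, it is far from automatic; the present paper sidesteps that analysis by invoking Theorem~\ref{thm:Riccati} as a given.
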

 Of course this result is only interesting for $t > \frac{\pi}{2r(0)}$. Note that it cannot be obtained directly from Theorem \ref{th3.2} which leads to a larger constant (see \cite{HeSj21} for this discussion).\\ Motivated by this theorem the two first authors generalize Theorem \ref{int2} as we now explain.

\subsubsection{Riccati equation and definition of $a^*, b^*$}\label{sss:astar_bstar}
Define $F$ on $\mathbb R_+\times \mathbb R$ by
 $$F(x,y):= - (x^2 + 2x y +1)\,.$$
For $\mu:[0, +\infty[ \to \Bbb{R}$ absolutely continuous, we pose $\Psi:]0, T[ \to \Bbb{R}$ the maximal solution to
\begin{equation}\label{defRic}
	\begin{cases}
	\Psi'(b) = F(\Psi(b),\mu(b))
	\\
	\lim_{b \to 0^+} \Psi(b) = +\infty.
	\end{cases}
\end{equation}
We then define
\begin{equation}\label{defa_0^*}
	b^*(\mu) := \inf \{b> 0\::\: \Psi(b) = 1\},
\end{equation}
with the usual understanding that if $\Psi(b)$ is never equal to $1$ then $b^*(\mu) = +\infty$.
If needed we write $\Psi(b;\mu)$ when we want to mention the dependence on $\mu$\,.

Equivalently, if
\begin{equation}\label{defPhi}
	\begin{cases}
	\Phi'(b) = - F(\Phi(b),\mu(b))
	\\
	\Phi(0) = 0,
	\end{cases}
\end{equation}
then it can be proven that $\Phi(b) = 1/\Psi(b)$ for all $b \in ]0, b^*(\mu)]$ and
\begin{equation}
	b^*(\mu) = \inf \{b > 0 \::\: \Phi(b) = 1\}.
\end{equation}
The proofs of these results and others are given in \cite{HeSj21} (Section 3).

In Theorem \ref{thm:Riccati} which follows, one uses a rescaled version of $b^*$. When
\begin{equation}\label{eq:def_mu}
	\mu(t) = \mu(t; m, \omega, r) = \frac{1}{r}\left(\frac{m'(t)}{m(t)} - \omega\right),
\end{equation}
and
\begin{equation}\label{eq:def_phi}
	\begin{cases}
	\phi'(t) = r(\phi(t)^2 + 2\mu(t)\phi(t) + 1),
	\\
	\phi(0) = 0,
	\end{cases}
\end{equation}
let
\begin{equation}\label{eq:def_astar}
	a^* = a^*(m, \omega, r) = \inf\{t \geq 0 \::\: \phi(t) = 1\}.
\end{equation}
One may check that 
\begin{equation}
	a^* = \frac{1}{r} b^*(\mu).
\end{equation}

\subsubsection{Extension of Wei's Theorem}
We will study applications of the following theorem, reformulated from \cite[Theorem 1.10]{HeSj21}.

\begin{theorem}\label{thm:Riccati}
Let $S(t)$ be a one-parameter strongly continuous semigroup acting on a Hilbert space $\mathcal{H}$ with generator $A$. Suppose that $m:[0, +\infty[ \to ]0, +\infty[$ is a positive continuous function 
with piecewise continuous derivative such that \eqref{eq:h1} holds.

Let $\omega \in \Bbb{R}$ be such that $r(\omega)$ from Definition \ref{def:r} is positive and let
\begin{equation}\label{eq:condromega}
r \in ]0, r(\omega)].
\end{equation}
Let $a^* = a^*(m, \omega, r)$ be as in \eqref{eq:def_astar}. Then, for every $t \geq 2a^*$,
\begin{equation}\label{eq:h5}
	\|S(t)\| \leq \ee^{(\omega - r)(t - 2a^*)}m(a^*)^2.
\end{equation}
\end{theorem}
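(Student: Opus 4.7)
My plan is to follow the proof scheme of Theorem \ref{int2} from \cite{HeSj21}, but with a more delicate choice of test functions governed by the Riccati equation \eqref{eq:def_phi}. A direct application of Theorem \ref{int2} (with $a = b = a^*$ and $r$ in place of $r(\omega)$) gives a bound strictly weaker than what is claimed here, so the Riccati structure is genuinely needed; indeed, even in Wei's special case the bound $e^{\pi/2 - rt}$ cannot be read off from Theorem \ref{int2} alone.

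First, since $r \le r(\omega)$, we have $\|(z - A)^{-1}\| \le 1/r$ on $\{\Re z > \omega\}$, so the proof scheme of Theorem \ref{int2} carries through with $r$ in place of $r(\omega)$. The core identity of that scheme represents $\langle S(t) u, v\rangle$, for $u, v \in \mathcal{H}$ and scalar test functions $f, g$ supported on $[0, a]$ and $[0, b]$ respectively, as a bilinear form in $f, g, u, v$ involving the resolvent on $\{\Re z = \omega\}$. Combining the uniform bound $\|(z-A)^{-1}\| \le 1/r$ with a Plancherel-type argument yields, for $t \ge a + b$, an inequality of the shape
\[
|\langle S(t) u, v\rangle| \le \frac{\|f\|_{L^2_{-\omega}} \|g\|_{L^2_{-\omega}}}{r\,\Pi_{f,g}(a, b, t)} \|u\|\,\|v\|,
\]
with $\Pi_{f,g}$ an explicit normalizing double integral depending on $f$, $g$, $m$, and $t$.

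The heart of the proof is then to minimize the right-hand side over admissible $f, g$. The naive choice $f = g = 1/m$ recovers Theorem \ref{int2}. To do better, I would fix $a = b = a^*$ and take $f, g$ determined by a first-order ODE obtained by differentiating the bound in the endpoint $a^*$; the quadratic coupling between $f$ and $g$ on the one hand, and the logarithmic derivative of $m$ on the other, combine into the Riccati equation $\phi' = r(\phi^2 + 2\mu\phi + 1)$, with $\phi(0) = 0$ encoding the vanishing of the test functions at the left endpoint and $\phi(a^*) = 1$ marking the saturation of the optimization. Substituting the resulting optimizers and using the Riccati identity to simplify, I expect to obtain after algebra precisely $\|S(t)\| \le e^{(\omega - r)(t - 2a^*)} m(a^*)^2$ for $t \ge 2a^*$.

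The main obstacle will be identifying the correct variational problem and establishing the equivalence between its Euler--Lagrange equation and the Riccati ODE \eqref{eq:def_phi}; once this is set up, the bookkeeping that produces the clean factor $m(a^*)^2$ and the exponent $(\omega - r)(t - 2a^*)$ is a matter of careful manipulation of the weighted $L^2$ norms using the identity $\phi' - r\phi^2 - 2r\mu\phi - r = 0$. A secondary subtlety is handling the regularity of $m$ (only piecewise $C^1$), so that $\mu$ is merely piecewise continuous and the Riccati solution must be pieced together across discontinuities while remaining well-defined on $[0, a^*]$.
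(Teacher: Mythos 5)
The paper itself does not prove Theorem~\ref{thm:Riccati}; it is stated as a reformulation of \cite[Theorem 1.10]{HeSj21} and the proof is delegated to that reference (Section~3 of \cite{HeSj21}). So there is no internal proof to compare against, and the proposal must be judged on its own.

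Your proposal is a plan rather than a proof, and you say as much yourself. The general compass heading is right and broadly consistent with the method of \cite{HelSj,HeSj21}: transfer the uniform resolvent bound $\|(z-A)^{-1}\|\le 1/r$ on $\{\Re z\ge\omega\}$ into a weighted $L^2$ estimate via Fourier--Plancherel, localize with test functions supported in $[0,a]$ and $[0,b]$, and pair the forward and adjoint semigroups. But everything that actually produces the theorem is deferred: you do not write the bilinear representation, you do not pose the precise variational problem in $f,g$, you do not derive its Euler--Lagrange equation, and you do not carry out the algebra that collapses the weighted-norm expressions to $m(a^*)^2 e^{(\omega-r)(t-2a^*)}$. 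The claimed equivalence between the Euler--Lagrange equation of your (unstated) functional and the Riccati ODE $\phi'=r(\phi^2+2\mu\phi+1)$ is asserted, not verified; nor is the role of the two boundary conditions $\phi(0)=0$ and $\phi(a^*)=1$ made precise within the optimization. Note also that a naive optimization of \eqref{int.4str} over $a=b$ alone produces an Euler--Lagrange condition of the form $2r\int_0^a m(s)^{-2}e^{2\omega s}\,ds = m(a)^{-2}e^{2\omega a}$, which is a \emph{linear} first-order ODE in the quantity $r\int_0^a m^{-2}e^{2\omega s}ds\cdot m(a)^2 e^{-2\omega a}$ (namely $\Phi'=r+2r\mu\Phi$), not the Riccati equation, and it yields a bound of the form $2\,e^{(\omega-r)(t-2a)}m(a)^2$ with an extra factor of $2$. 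This discrepancy shows that simply ``fixing $a=b=a^*$ and differentiating the bound in the endpoint'' cannot by itself produce the quadratic term $r\phi^2$ in \eqref{eq:def_phi}; the Riccati nonlinearity must enter at the level of the choice of the cutoff profile inside $[0,a^*]$, not merely through the choice of $a^*$. Until you exhibit that profile and check that the substitution closes up to exactly $m(a^*)^2 e^{(\omega-r)(t-2a^*)}$, there is a genuine gap at the heart of the argument.
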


One way to write the result of this theorem is that we have the updated upper bound
\[
	\|S(t)\| \leq U(m, \omega, r)(t)
\]
where
\begin{equation}\label{eq:thm_Riccati_U}
	U(m, \omega, r)(t) = 
	\begin{cases} 
	m(t), & 0 \leq t \leq 2a^*,
	\\
	\min\{m(t), \ee^{(\omega - r)(t-2a^*)}m(a^*)^2\}, & t > 2a^*.
	\end{cases}
\end{equation}

\subsection{Goal and organization of the paper}

The goal of the present work is to explore the optimality of this result and possible improvements which could be obtained by iteration of the theorem or its application with different pairs $(\omega,r)$ with $r \leq r(\omega)$.

In Section 2, we discuss how the result of Theorem \ref{thm:Riccati} depends on the parameters $r$, $\omega$, and $(\log m)'$. In Section 3, we discuss the example of a shift on a bounded interval, for which the bound of Theorem \ref{th3.2} is optimal up to a finite time. In Section 4, we discuss improvements of upper bounds which come from the semigroup property $\|S(t_1 + t_2)\| \leq \|S(t_1)\| \|S(t_2)\|$. Finally, in Section 5, we consider iterations of Theorem \ref{thm:Riccati} which give sequences of upper bounds with piecewise affine logarithms.

\section{Dependence of Theorem \ref{thm:Riccati} on parameters}

To better understand solutions to \eqref{eq:def_phi}, we study how these solutions depend on the parameters $r > 0$, $\omega \in \Bbb{R}$, and the function $\mu$. We regard each as an independent variable, even though for instance $r$ depends on $\omega$ in the applications we consider.

\subsection{Monotonicity with respect to $r,\omega$}\label{ss:monot_romega}

To begin, we suppose that the function $\mu$ is fixed. To remove the dependence on $r$ of $\mu$ in \eqref{eq:def_mu}, we define
\begin{equation}\label{eq:def_mu1}
	\mu_1 = r\mu = \frac{m'(t)}{m(t)} - \omega.
\end{equation}
We study the dependence of $a^*$ in \eqref{eq:def_astar} on $r$ and on $\omega$ separately, regarded $r$ and $\omega$ as independent variables.

\begin{proposition}\label{prop:astardecr}
Let $a^* = a^*(m, \omega, r)$ be defined as in \eqref{eq:def_astar}. When $m$ and $\omega$ are fixed, $a^*$ is a decreasing function of $r$. When $m$ and $r$ are fixed, $a^*$ is an increasing function of $\omega$.
\end{proposition}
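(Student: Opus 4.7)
The plan is to reduce the proposition to standard ODE comparison arguments, after rewriting \eqref{eq:def_phi} so that $r$ and $\omega$ appear as separate parameters. Setting $\mu_1(t) = r\mu(t) = m'(t)/m(t) - \omega$, which depends on $\omega$ but not on $r$, the equation for $\phi$ becomes
\begin{equation*}
\phi'(t) = r\bigl(\phi(t)^2 + 1\bigr) + 2\mu_1(t)\phi(t), \quad \phi(0) = 0.
\end{equation*}
A preliminary observation I would record before comparing solutions is that $\phi'(0) = r > 0$, so that $\phi(t) > 0$ on $]0, a^*[$: if $\phi$ vanished at some interior point $t_0$, the equation would give $\phi'(t_0) = r > 0$, which combined with $\phi > 0$ just before $t_0$ contradicts continuity.

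For the monotonicity in $r$, I fix $m$ and $\omega$ and compare the solutions $\phi_1, \phi_2$ corresponding to $r_1 < r_2$. At $t = 0$ we have $\phi_2'(0) - \phi_1'(0) = r_2 - r_1 > 0$, so $\phi_2 > \phi_1$ on some initial interval. A standard crossing argument then closes the step: at any putative later time $t^*$ where $\phi_1(t^*) = \phi_2(t^*) = \phi^*$, the ODE yields
\begin{equation*}
\phi_2'(t^*) - \phi_1'(t^*) = (r_2 - r_1)\bigl((\phi^*)^2 + 1\bigr) > 0,
\end{equation*}
so $\phi_2$ cannot drop back down to $\phi_1$. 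Hence $\phi_2 > \phi_1$ on the common interval of existence, so $\phi_2$ reaches the value $1$ no later than $\phi_1$, giving $a^*(m,\omega,r_2) \le a^*(m,\omega,r_1)$.

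For the monotonicity in $\omega$, I fix $m$ and $r$ and compare the solutions $\phi_1, \phi_2$ associated with $\omega_1 < \omega_2$, writing $\mu_{1,i}(t) = m'(t)/m(t) - \omega_i$ so that $\mu_{1,1} - \mu_{1,2} = \omega_2 - \omega_1 > 0$. The main obstacle lies here: both solutions share the initial value $0$ \emph{and} the initial derivative $r$, so the crossing argument of the previous paragraph fails to separate them at $t=0$. I would handle this degeneracy by passing to the linear equation satisfied by $\psi = \phi_1 - \phi_2$, namely
\begin{equation*}
\psi'(t) = \bigl[r(\phi_1(t) + \phi_2(t)) + 2\mu_{1,2}(t)\bigr]\psi(t) + 2(\omega_2 - \omega_1)\phi_1(t), \quad \psi(0) = 0,
\end{equation*}
whose source term is nonnegative, and strictly positive for $t > 0$ by the preliminary observation applied to $\phi_1$. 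A variation-of-constants formula (or a Gronwall-type argument) then gives $\psi(t) > 0$ on $]0, \min(a^*_1, a^*_2)[$, so $\phi_1$ reaches $1$ no later than $\phi_2$, which proves $a^*(m, \omega_1, r) \le a^*(m, \omega_2, r)$.

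The cases where some $a^*$ equals $+\infty$ require no separate treatment, since in each comparison the inequality $\phi_{\text{upper}} \ge \phi_{\text{lower}}$ holds on the full common interval of existence and automatically forces the hitting times of the value $1$ to be ordered in the correct direction. The only genuine technical point is the tangential separation in the $\omega$-argument, which is why I would base that step on the linear equation for $\psi$ rather than on a direct sign comparison of the right-hand side.
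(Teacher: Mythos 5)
Your proof is correct, and it is close in spirit to the paper's but differs in presentation. The paper differentiates the ODE \eqref{eq:def_phi} with respect to the parameter ($r$ or $\omega$), obtains a linear equation for $\partial_r\phi$ (resp.\ $\partial_\omega\phi$) with zero initial data and a strictly positive (resp.\ strictly negative) source, and concludes $\partial_r\phi > 0$ (resp.\ $\partial_\omega\phi < 0$) by an integrating-factor argument. You instead compare two solutions at finite parameter separation: for $r$, a standard first-crossing argument, observing that at a putative first crossing the right-hand sides still differ by $(r_2-r_1)((\phi^*)^2+1) > 0$; for $\omega$, since the initial slopes coincide and the crossing argument degenerates, a linear ODE for $\psi = \phi_1 - \phi_2$ with source $2(\omega_2-\omega_1)\phi_1 > 0$, which is the discrete counterpart of the paper's integrating-factor step. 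Your approach avoids having to justify smooth dependence of $\phi$ on the parameter, which is a small technical simplification; the paper's approach has the advantage of treating both parameters by an identical calculation. Both correctly handle the subtle tangency at $t=0$ when varying $\omega$, and your preliminary observation that $\phi > 0$ on $]0, a^*[$ is exactly the needed input to make the source strictly positive.
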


\begin{proof}
We suppose that $m$ is fixed throughout. We begin by showing that $\phi$ is an increasing function of $r$ for $\omega$ fixed, and we will therefore write 
\[
	\phi = \phi(t, r), \quad a^* = a^*(r)
\]
where $a^*(r)$ the first solution to $\phi(a^*(r), r) = 1$ as in \eqref{eq:def_astar}. We claim that 
\begin{equation}\label{1.addS2}
\partial _r\phi(t, \omega, r) > 0, \quad \forall t \in ]0,a^*(\omega, r)].
\end{equation}
To see this, we differentiate with respect to $r$ the equation satisfied by $\phi$
\begin{equation}\label{1.addS1bis}
\partial_t \phi (t, r) = G (\phi(t, r), \mu_1(t), r)
\end{equation}
with $\mu_1$ from \eqref{eq:def_mu1} and
\begin{equation}
G(x,y,r) =  r x^2 + r + 2 x  y.
\end{equation}
We get, with 
$$
g(t,r) := (\partial_r\phi )(t, r)\,,
$$
a linear ordinary differential equation of order one in the $t$ variable for $g$ which reads
\begin{subequations}\label{3.addS2new}
\begin{equation}\partial _t  g(t,r)  -(\partial _x G)(\phi,\mu_1,r)
g=(\partial _r G)(\phi,\mu_1,r ) = \phi^2 + 1 > 0\,,
\end{equation}
on $]0,a^*]$, with initial condition at $0$
\begin{equation}
g(0,r)=0\,.
\end{equation}
\end{subequations}
We introduce
$$
h(t) = e^{\theta(t)} g(t)\,,
$$
with 
$$
\theta'(t)= (\partial _x G)(\phi,\mu_1,r)\,,
$$
which satisfies
\begin{equation}\label{eq:dependence_r_h}
h'(t) = e^{\theta}(\partial _r G)(\phi,\mu_1,r )>0\,,\, h(0)=0\,.
\end{equation}
This implies $h> 0$ and coming back to $g$ and $\phi$
we have proven that
$\phi(t, \omega, r)$  is an increasing function of $r$, so $a^*(r)$ is decreasing.

Next, we suppose that $r$ is fixed and $\omega$ varies (while $m$ remains fixed). To emphasize this choice, we will now write
\[
	\phi = \phi(t, \omega), \quad a^* = a^*(\omega).
\]
Notice that, with $\mu_1$ from \eqref{eq:def_mu1},
\[
	\partial_\omega \mu_1 = -1.
\]
Therefore for $t \in ]0, a^*(\omega)]$
\begin{subequations}
\begin{equation}
	\partial_t(\partial_\omega \phi) - (\partial_x G)(\phi, \mu_1, r)\partial_\omega \phi 
	= - (\partial_y G)(\phi, \mu_1, r) 
	= - 2\phi < 0.
\end{equation}
We also have
\begin{equation}
	\partial_\omega \phi(0, \omega) = 0.
\end{equation}
\end{subequations}

The same argument which gave \eqref{eq:dependence_r_h} gives this time that
\begin{equation}\label{5.addS2bis}
\partial _\omega \phi < 0 \mbox{ on } ]0,a^*(r)]\,.
\end{equation}
Therefore $\phi$ is a decreasing function of $\omega$ and $a^*$ is an increasing function of $\omega$, completing the proof.
\end{proof}

\subsection{Monotonicity with respect to $\mu$}

We now suppose that there is some parameter $\theta$ varying in an interval $J$ such that $\mu(t) = \mu(t, \theta)$ varies smoothly in $\theta$. In this subsection, we therefore write
\[
	\phi(t,\theta)
\]
for the solution to \eqref{eq:def_phi} to emphasize this dependence. We also write $a^*(\theta)$ for the first solution to $\phi(a^*(\theta), \theta) = 1$ as in \eqref{eq:def_astar}.

Recall that $0 < \phi(t,\theta)  \le 1$ for $t \in [0, a^*(\theta)]$, and let 
\[
	g(t,\theta) = -\log \phi(t, \theta) \ge 0, \quad t \in [0, a^*(\theta)].
\]
Then (\ref{eq:def_phi}) implies that
\begin{equation}\label{monom.2}
\partial_tg=-2(\mu +r\cosh g).
\end{equation}
Differentiating (\ref{monom.2}) with respect to $\theta $, we get
\begin{equation}\label{monom.3}
\partial _t\partial_\theta g+2r(\sinh g)\partial_\theta
g=-2\partial _\theta \mu . 
\end{equation}
Here, we claim that
\begin{equation}\label{monom.4}
\lim_{t \rightarrow 0^+}\partial_\theta g(t,\theta )=0\,.
\end{equation}
To see this, we come back to  \eqref{monom.2} for $\phi$ 
which we write in the form
\begin{equation}\label{monom.1bis}
\partial _t\phi  =2\mu \phi + r  (1 + \phi^2) \,,\, \phi (0)=0\,,
\end{equation}
and get the Taylor expansion
\begin{equation}\label{monom.5}
\phi(t,\theta )= rt+{\mathcal O}(t^2),\ t\to 0.
\end{equation}
We get
\begin{equation}\label{monom.6a}
g(t,\theta )=\log \frac{1}{rt}+\log (1+\mathcal{O}(t))=\log \frac{1}{rt}+\mathcal{O}(t),
\end{equation}
\begin{equation}\label{monom.6b}
\partial_\theta g(t,\theta )= \mathcal{O}(t),
\end{equation}
and (\ref{monom.4}) follows.

From (\ref{monom.3}) and (\ref{monom.4}), we get
\begin{equation}\label{monom.7}
\partial _\theta g(t,\theta )=-2\int_0^t
e^{-2r\int_s^t\mathrm{sinh\,}g(\sigma )d\sigma } \partial _\theta \mu
(s,\theta )\, ds.
\end{equation}

Recall that $a^*=a^*(\theta )$ satisfies  $\phi (a^*(\theta ),\theta
)=1$, which implies 
\begin{equation}\label{monom.9}
g(a^*(\theta ), \theta)=0.
\end{equation} 
Assume that $a^*(\theta )$ satisfies
\begin{equation}\label{monom.10}
\mu (a^*(\theta ),\theta )
+ r > 0 \,,
\end{equation}
at some point $\theta$. By \eqref{monom.2} this implies that $a^*(\theta)$ is differentiable and $g(a^*(\theta), \theta)$ has a nondegenerate zero in \eqref{monom.9}. Differentiating (\ref{monom.9}) in a suitably small neighborhood of $\theta$, we get
$$
(\partial _tg)(a^*(\theta ),\theta )\partial _\theta a^*(\theta
)+(\partial _\theta g)(a^*(\theta ),\theta )=0,
$$
i.e.\
\begin{equation}\label{monom.11}
\partial _\theta a^*(\theta )=-\frac{\partial _\theta g}{\partial
  _tg}(a^*(\theta ),\theta ). 
\end{equation}

\begin{proposition}\label{monom1}
Assume (\ref{monom.10}) and
\begin{equation}\label{monom.12}
\partial _\theta \mu (t,\theta )\ge 0,\ \ 0\le t \le a^*(\theta ).
\end{equation}
Then $\partial _\theta a^*(\theta )\le 0$.
\end{proposition}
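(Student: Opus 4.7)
The plan is to read the sign of $\partial_\theta a^*(\theta)$ directly from formula \eqref{monom.11}, using the two hypotheses of the proposition to control the numerator and the denominator separately. Since both \eqref{monom.7} and \eqref{monom.11} have already been established, the entire argument reduces to a sign check at $t=a^*(\theta)$, so there is no real obstacle; the role of \eqref{monom.10} will be precisely to ensure that the denominator in \eqref{monom.11} does not vanish.

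First I would evaluate the denominator. By \eqref{monom.9} we have $g(a^*(\theta),\theta)=0$, so $\cosh g(a^*(\theta),\theta)=1$, and \eqref{monom.2} gives
\begin{equation*}
\partial_t g(a^*(\theta),\theta) = -2\bigl(\mu(a^*(\theta),\theta) + r\bigr).
\end{equation*}
Hypothesis \eqref{monom.10} then yields $\partial_t g(a^*(\theta),\theta) < 0$, in particular nonzero, which both justifies dividing by it in \eqref{monom.11} and fixes the sign of the denominator as strictly negative.

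Next I would control the numerator using \eqref{monom.7} at $t=a^*(\theta)$:
\begin{equation*}
\partial_\theta g(a^*(\theta),\theta)
= -2\int_0^{a^*(\theta)} e^{-2r\int_s^{a^*(\theta)} \sinh g(\sigma)\,d\sigma}\,\partial_\theta \mu(s,\theta)\,ds.
\end{equation*}
The exponential weight is positive, and by hypothesis \eqref{monom.12} the factor $\partial_\theta \mu(s,\theta)$ is nonnegative on the whole interval of integration $[0,a^*(\theta)]$. Hence the integral is nonnegative and $\partial_\theta g(a^*(\theta),\theta)\le 0$.

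Finally I would combine these two sign evaluations in \eqref{monom.11}: the numerator is $\le 0$ and the denominator is $<0$, so the quotient is $\ge 0$, and the minus sign in \eqref{monom.11} gives $\partial_\theta a^*(\theta)\le 0$, which is the desired conclusion. The only mildly delicate point is that \eqref{monom.11} itself presupposes differentiability of $a^*$ near $\theta$, but this was already noted in the paragraph preceding the statement as a consequence of the nondegeneracy of the zero of $g(\cdot,\theta)$ at $a^*(\theta)$ guaranteed by \eqref{monom.10}.
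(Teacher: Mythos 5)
Your proof is correct and follows essentially the same route as the paper's: use \eqref{monom.7} with \eqref{monom.12} to see $\partial_\theta g(a^*(\theta),\theta)\le 0$, use \eqref{monom.2}, \eqref{monom.9} and \eqref{monom.10} to see $\partial_t g(a^*(\theta),\theta)<0$, and read off the sign from \eqref{monom.11}. You simply spell out in more detail the sign computation that the paper compresses into a single sentence.
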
 
\begin{proof}
From (\ref{monom.7}), (\ref{monom.12}) we
see that $\partial _\theta g(a^*(\theta ),\theta )\le 0$ and it
suffices to combine this, \eqref{monom.2}, and (\ref{monom.10}) with
(\ref{monom.11}).
\end{proof}

\begin{remark}
If we assume instead of (\ref{monom.12}) that $\partial _\theta
\mu \le 0$ on $[0,a^*(\theta )]$, then $\partial _\theta a^*(\theta
)\ge 0$. 
\end{remark}

\begin{remark}
Recall that $\mu_1 (t,\theta ) + \omega =\partial _t\log m(t,\theta
)$. Assuming that $m(0,\theta )=1$, we get
$\log m(t,\theta )=\int_0^t (\mu_1 (s,\theta ) + \omega) ds$, so the assumption that
$\partial _\theta \mu (s,\theta )\ge 0$ on $[0,a^*(\theta )]$ implies
that $\partial _\theta \log m(t,\theta )\ge 0$ on the same interval and
hence that $\partial _\theta m(t,\theta )\ge 0$. But the converse is not necessarily true.
\end{remark}

\section[The differentiation operator]{Analysis of the differentiation
  operator on an interval.}\label{Section3}

The starting point is a paragraph in \cite[Chapter~14]{He1} presenting a toy model described by Embree-Trefethen \cite[Chapter~15]{TrEm}. The goal is to prove that in this case the Wei constant $e^{\pi/2}$ in \eqref{eq:w} is optimal in the sense that
\[
	\sup\{\ee^{r(0)t}\|\exp(tA)\|\::\: -A\textnormal{ is }m\textnormal{-accretive}, t \geq 0\} = \ee^{\pi/2}.
\]

We consider the operator $A$ defined on $L^2(]0,1[)$ by
\begin{subequations}\label{eq:3.1}
\begin{equation}
D(A)=\{u \in H^1(]0,1[)\,,\, u(1)=0\}\,,
\end{equation}
and
\begin{equation}
Au = u'\,,\, \forall u\in D(A)\,.
\end{equation}
\end{subequations}
This is clearly a closed operator with dense domain.

The adjoint of $A$ is defined on $L^2(]0,1[)$
 by
$$
D(A^*)=\{u \in H^1(]0,1[)\,,\, u(0)=0\}\,,
$$
and
$$
A^*u = - u'\,,\, \forall u\in D(A^*)\,.
$$

\begin{lemma}~\\ With $A$ as above, 
$\sigma(A)=\emptyset$ and $A$ has compact resolvent.
\end{lemma}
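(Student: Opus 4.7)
The plan is to compute the resolvent $(z-A)^{-1}$ explicitly for every $z \in \Bbb{C}$ by solving the corresponding first-order ODE with a single boundary condition, and then to read off compactness from the resulting explicit integral kernel.

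First I would fix $z \in \Bbb{C}$ and $f \in L^2(]0,1[)$, and look for $u \in D(A)$ such that $(z-A)u = f$, that is $-u'+zu = f$ on $]0,1[$ with $u(1)=0$. Multiplying by the integrating factor $\ee^{-zx}$ and integrating from $x$ to $1$ using $u(1)=0$ would yield the explicit formula
\[
  u(x) = \int_x^1 \ee^{z(x-s)} f(s)\,ds.
\]
I would then verify that this $u$ is absolutely continuous, that $u' \in L^2(]0,1[)$, that $u(1)=0$, and that $(z-A)u=f$ holds pointwise a.e., so that $u \in D(A)$. Uniqueness is immediate, since any $v\in D(A)$ with $(z-A)v=0$ satisfies $v'=zv$ with $v(1)=0$, forcing $v\equiv 0$. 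Hence $z-A : D(A)\to L^2(]0,1[)$ is a bijection for every $z\in \Bbb{C}$, which gives $\sigma(A)=\emptyset$.

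For the compactness of the resolvent, I would observe that the formula above realises $(z-A)^{-1}$ as an integral operator with kernel
\[
  K_z(x,s) = \ee^{z(x-s)}\, \mathbf{1}_{\{s\ge x\}}(x,s),
\]
which clearly belongs to $L^2(]0,1[\times ]0,1[)$. Therefore $(z-A)^{-1}$ is Hilbert-Schmidt and in particular compact. Alternatively, the explicit expression shows that $(z-A)^{-1}$ maps $L^2(]0,1[)$ boundedly into $D(A)\subset H^1(]0,1[)$, and one concludes by invoking the Rellich-Kondrachov compact embedding $H^1(]0,1[)\hookrightarrow L^2(]0,1[)$.

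There is no substantive obstacle: the argument reduces to elementary ODE theory on a bounded interval, the key point being that a first-order equation with a single endpoint condition is always uniquely solvable. The only minor technical verification is that the explicit $u$ above genuinely lies in $D(A)$, which follows directly from the fundamental theorem of calculus applied to an integral whose upper limit is $1$.
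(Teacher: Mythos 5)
Your proof is correct and follows essentially the same route as the paper: you solve the first-order ODE $(z-A)u=f$ explicitly using the boundary condition $u(1)=0$ to obtain the integral kernel, deduce invertibility of $z-A$ for every $z\in\Bbb{C}$, and conclude compactness from the fact that the kernel lies in $L^2(]0,1[\times]0,1[)$ (Hilbert--Schmidt). The only additions are minor: you also mention the alternative Rellich--Kondrachov argument, and you state the formula with the correct sign for $(z-A)^{-1}$, whereas formula \eqref{inverse} in the paper, with its leading minus sign, is in fact the kernel of $(A-z)^{-1}$.
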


\begin{proof}
First we can observe that $(A-z)$ is injective on $D(A)$ for any $z\in
\mathbb C$. This is simply the observation that $u\in {\rm Ker } (A-z)$ should satisfy $$- u'(t)= z\,  u(t) \mbox{ and } u(1)=0\,.$$ One also  easily verifies that, for any $z\in \mathbb C$,
the
 inverse is given by
\begin{equation}\label{inverse}
[(z-A)^{-1} f](x)  = -  \int_x^1 \exp z(x-s)\; f(s)\, ds\,.
\end{equation}
It is also clear that this operator is compact (for example because its distribution kernel is in $L^2(]0,1[\times ]0,1[)$).
\end{proof}

We recall that 
\begin{equation}
	\psi(z):=||(A-z)^{-1}||
\end{equation}
is subharmonic. Observing that, for any $\alpha \in \mathbb R$, the map 
\[
	u\mapsto U_\alpha u :=\exp (i\alpha x) \, u
\]
is a unitary
 transform on $L^2(]0,1[)$, which maps $D(A)$ onto $D(A)$ and such that  $U_\alpha^{-1} A U_\alpha = A + i \alpha$\,,  we deduce that $\psi$ depends only on $\Re z$. 

For $u\in D(A)$ and $z \in \Bbb{C}$,
$$
- \Re \langle (A-z) u\,,\, u\rangle = \Re z \|u\|^2 + \frac 12  |u(0)|^2\geq \Re z
\|u\|^2\,.
$$
In particular $- A$ is accretive and satisfies the assumptions of Theorem \ref{th3.2} of D.~Wei.

In order to apply Theorem \ref{th3.2}, we have to compute $r(0)=1/\psi(0)$. Hence we have to compute $||A^{-1} ||$. In our case, we get that $r(0)$ is the square root of the smallest eigenvalue of $A^*A$. It is easy to show that the domain of $A^*A$ is given by
\begin{subequations}
\begin{equation}
D(A^* A)= \{u\in H^2([0,1]), u'(0)=u(1)=0\}\,,
\end{equation}
and that  for $u\in  D(A^*A)$
\begin{equation}
A^*A u = - u''\,.
\end{equation}
\end{subequations}
The lowest eigenvalue is $\pi^2/4$ with corresponding eigenspace generated by  $u_0(x)= \cos (\pi x/2)\,$. So finally we have 
$$
r(0) =\pi/2\,,
$$ 
and Wei's theorem gives
\begin{equation}
||S(t)|| \leq \exp\left( \pi/2 (1-t)\right)\,.
\end{equation}
On the other hand, one can directly compute the norm of $S(t) $. We have indeed for $u\in L^2(]0,1[)$:
$$
(S(t) u) (x) = \tilde u (x+t)\,,
$$
where $\tilde u$ is the extension of $u$ by $0$ on $]1,+\infty[$. For $t>1$, one immediately sees that 
$$
S(t)= 0\,.
$$
For $t < 1$, one gets
$$
||S(t)||=1\,.
$$
We have indeed
$$
1 \geq || S(t)|| \geq || S(t) \phi ||_{L^2} \,,
$$
for any normalized $L^2$ function on $]0,1[$. If we choose $\phi_t$ such that $||\phi_t||=1$ and ${\rm supp} (\phi_t) \subset (t,1) $, we immediately get
$$
|| S(t) \phi||_{L^2}= ||\tilde \phi_t(\cdot +t)||= 1\,.
$$
We now show that Wei's constant is optimal. Suppose that there exists $C<1$ such that 
\begin{equation}
||S(t)|| \leq C \exp\big( \pi/2 (1-t)\big)\,.
\end{equation}
For $t <1$, this implies
\begin{equation}
1 \leq C \exp\big( \pi/2 (1-t)\big)\,.
\end{equation}
We get a contradiction as $t \rightarrow 1$ (with $t<1$).

\begin{remark}
As observed in \cite{TrEm}, one can discretize the preceding problem by considering, for $n\in
\mathbb N^*$,  the matrix 
$ A_n = n A_1$ with $A_1 =I + J$
 where $J$ is the $n\times n$ matrix such that
$J_{i,j} = \delta_{i+1,j}$. One can observe that the spectrum of
 $A_n$ 
 is $\{n\}$. 
\end{remark}

An interesting theorem related to the present study is \cite[Theorem~15.6]{TrEm}.
\begin{theorem}\label{thm:sg_vanishes}
Let $A$ be a closed linear operator generating a $C_0$ semigroup. For any $\tau >0$, the following properties are equivalent:
\begin{enumerate}[(a)]
\item\label{it:vanishes_1} $e^{\tau A} =0$\,.
\item\label{it:vanishes_2}  $\sigma (A)=\emptyset $ and  there exists $C>0$ and $\omega_0 < 0$ such that, for $\omega \in ]-\infty,\omega_0]$ 
\begin{equation}\label{eq:abcd}
 \frac{1}{r(\omega)} \leq C e^{ -\tau \omega}\,.
\end{equation}
\end{enumerate}
\end{theorem}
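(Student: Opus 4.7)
My plan is to establish the two implications separately: \ref{it:vanishes_1} $\Rightarrow$ \ref{it:vanishes_2} will come from a Laplace-transform computation exploiting that $S(t)$ has compact temporal support, while \ref{it:vanishes_2} $\Rightarrow$ \ref{it:vanishes_1} will follow from Theorem~\ref{int2} in the limit $\omega \to -\infty$.

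For \ref{it:vanishes_1} $\Rightarrow$ \ref{it:vanishes_2}, I will first observe that $S(\tau) = 0$ together with the semigroup property forces $S(t) = S(t-\tau)S(\tau) = 0$ for all $t \ge \tau$. The Laplace representation \eqref{int.2} therefore truncates to
\[
R(z) := \int_0^\tau S(t)\, e^{-tz}\, \dd t,
\]
an operator-valued entire function of $z$ which agrees with $(z-A)^{-1}$ on $\{\Re z > \omega_0\}$; analytic continuation then identifies $R(z)$ with the resolvent everywhere, yielding $\sigma(A) = \emptyset$. Setting $M_0 := \sup_{s \in [0,\tau]}\|S(s)\|$ (finite by local boundedness) and using $e^{-s\Re z} \le e^{-s\omega}$ on $[0,\tau]$ for $\omega < 0$, a direct estimate of $R(z)$ gives $\|(z-A)^{-1}\| \le M_0\,\tau\, e^{-\tau\omega}$ uniformly over $\Re z > \omega$, which is \eqref{eq:abcd} with $C = M_0\,\tau$.

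For \ref{it:vanishes_2} $\Rightarrow$ \ref{it:vanishes_1}, I will fix $t > \tau$ and $\alpha > 0$ with $2\alpha < t - \tau$, and apply Theorem~\ref{int2} with $a = b = \alpha$, $\omega \le \omega_0$, and a continuous positive majorant $m$ of $\|S(\cdot)\|$ which is constant equal to some $M_0$ on $[0,\alpha]$. A direct integration then gives
\[
\left\|\tfrac{1}{m}\right\|_{e^{-\omega\cdot}L^2(]0,\alpha[)}^{2} = \frac{1 - e^{2\omega\alpha}}{2\,M_0^2\,|\omega|},
\]
and substituting this together with the hypothesis $r(\omega)^{-1} \le C e^{-\tau\omega}$ into \eqref{int.4str} will yield a bound of the shape
\[
\|S(t)\| \le \frac{2\,M_0^2\,C\,|\omega|}{1 - e^{2\omega\alpha}}\; e^{\omega(t-\tau)}\; e^{-r(\omega)(t-2\alpha)}.
\]
Letting $\omega \to -\infty$, the polynomial factor $|\omega|$ is crushed by $e^{\omega(t-\tau)}$ (here $t > \tau$ is crucial), the denominator tends to $1$, and $e^{-r(\omega)(t-2\alpha)} \le 1$ since $r(\omega) > 0$ and $t > 2\alpha$. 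This will show $\|S(t)\| = 0$ for every $t > \tau$, and strong continuity of $S$ at $\tau$ will then give $S(\tau)u = \lim_{t \to \tau^+} S(t)u = 0$ for every $u \in \mathcal{H}$, i.e.\ $e^{\tau A} = 0$.

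The step I expect to require the most care is this limit $\omega \to -\infty$: the polynomial growth $|\omega|$ arising from $\|1/m\|^{-2}$ must be strictly dominated by $e^{\omega(t-\tau)}$, which forces a strict inequality $t > \tau$ throughout and a final passage $t \to \tau^+$ via strong continuity rather than a direct substitution $t = \tau$. A secondary point to verify is that the factor $e^{-r(\omega)(t-2\alpha)}$ causes no obstruction: hypothesis (b) supplies only a lower bound on $r(\omega)$ and no upper bound is available, but since $r(\omega) > 0$ and $t-2\alpha > 0$ this exponential is at most $1$, so no upper bound on $r(\omega)$ is needed.
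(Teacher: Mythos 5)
Your plan is correct and follows essentially the same route as the paper: for \ref{it:vanishes_1}$\Rightarrow$\ref{it:vanishes_2} the paper likewise truncates the Laplace representation to $\int_0^\tau e^{-tz}S(t)\,dt$ and invokes Banach--Steinhaus for the local bound on $\|S\|$, and for \ref{it:vanishes_2}$\Rightarrow$\ref{it:vanishes_1} it likewise applies Theorem~\ref{int2} and sends $\omega\to-\infty$. The only cosmetic differences are that the paper takes the global majorant $m(t)=Me^{\omega_0 t}$ with $a=b=t/2$ and uses the weaker form \eqref{int.4} (which coincides with \eqref{int.4str} when $a+b=t$), whereas you fix $a=b=\alpha<(t-\tau)/2$ with $m$ constant on $[0,\alpha]$ and use \eqref{int.4str}; both produce the same dominant behavior $|\omega|e^{\omega(t-\tau)}\to 0$, and your explicit remark that strong right-continuity at $\tau$ upgrades $S(t)=0$ for $t>\tau$ to $S(\tau)=0$ is a point the paper leaves implicit.
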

\begin{proof}
The proof that \ref{it:vanishes_1} implies \ref{it:vanishes_2} is a consequence of the formula $$(A-z)^{-1} = \int_0^{\tau} e^{- tz } S(t) dt\,,$$ together with the Banach-Steinhaus theorem.

The proof that  \ref{it:vanishes_2}  implies  \ref{it:vanishes_1} is an easy application of Theorem \ref{int2}. By the Banach-Steinhaus Theorem and the semi-group theory we can take, for some $M>0$ and $\omega_0 \geq 0$ 
$$
m(t)=M \exp \omega_0 t\,.
$$
(The accretive case corresponds to $M=1$ and $\omega_0=0$.)

We apply the theorem in the limiting case when $a=b= t/2$ and \eqref{int.4} gives when $\omega < \omega_0$
 \begin{equation}\label{eq:gp1aa}
 || S(t)|| \leq 2 M^2 (\omega_0 - \omega) \frac {e^{\omega t} }{r(\omega)} \, (1 -e^{(\omega -\omega_0) t})^{-1} \,.
    \end{equation}
By \eqref{eq:abcd}, for all $\omega \leq \omega_0$
\[
	\frac{\ee^{\omega t}}{r(\omega)} \leq Ce^{\omega(t-\tau)}.
\]
When $t >\tau$, in the limit $\omega \rightarrow -\infty$ the estimate \eqref{eq:gp1aa} gives $S(t) = 0$ as claimed.
\end{proof}
 
When applied to the differential operator $A$ introduced in \eqref{eq:3.1}, the estimate \eqref{eq:abcd} is proven with $\tau=1$ (see \cite{TrEm} or \cite[Chapter 14, (14.1.3)]{He1}).
We propose below an alternative approach to the control of $||(A-z)^{-1}||$ for our differential operator $A$.

We begin by introducing the function $\nu$ such that, for all $\omega \in \Bbb{R}$,
\[
	\|(\omega-A)^{-1}\| = r(\omega) = \sqrt{\omega^2 + \nu(\omega)^2}\,.
\]

\begin{proposition}\label{prop:diffop_nu}
There exists a unique continuous function $\nu(z)$ on $\Bbb{R}$ with values in $\ii]0, +\infty[$ for $z < -1$ and in $[0, \pi[$ when $z \geq -1$ such that
\[
	-\nu(z) \cot \nu(z) = z, \quad \forall z \in \Bbb{R} \backslash \{-1\}.
\]
The functions $\nu(z)^2$ and $z^2 + \nu(z)^2$ are increasing, and as $z \to -\infty$,
\begin{equation}\label{eq:diffop_r_asymptotic}
	z^2 + \nu(z)^2 = 4z^2 \ee^{-2|z|}(1 + \BigO(\ee^{-2|z|}).
\end{equation}
\end{proposition}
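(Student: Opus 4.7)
The plan is to treat the proposition as a statement about the transcendental equation $-\nu\cot\nu = z$, which I would split into two natural branches: a real branch with $\nu \in (0,\pi)$ for $z > -1$, and an imaginary branch $\nu = \ii\nu_0$, $\nu_0 > 0$, for $z < -1$. Under $\nu = \ii\nu_0$ the equation becomes $-\nu_0\coth\nu_0 = z$, so each branch is governed by a single real variable. The two branches glue continuously at the common value $\nu = 0$, $z = -1$, giving the ranges claimed in the proposition.

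For existence and uniqueness I would study the auxiliary functions $f_1(\nu) = -\nu\cot\nu$ and $f_2(\nu_0) = -\nu_0\coth\nu_0$. A short computation gives
\[
f_1'(\nu) = \frac{\nu - \tfrac{1}{2}\sin 2\nu}{\sin^2\nu}, \qquad f_2'(\nu_0) = \frac{\nu_0 - \tfrac{1}{2}\sinh 2\nu_0}{\sinh^2\nu_0},
\]
so the elementary inequalities $\sin 2\nu < 2\nu$ and $\sinh 2\nu_0 > 2\nu_0$ for positive arguments show that $f_1$ is strictly increasing and $f_2$ strictly decreasing. Combined with the boundary limits $f_1(0^+) = f_2(0^+) = -1$, $f_1(\pi^-) = +\infty$, and $f_2(+\infty) = -\infty$, this identifies $f_1$ as a bijection $(0,\pi) \to (-1,+\infty)$ and $f_2$ as a bijection $(0,\infty) \to (-\infty,-1)$, producing unique continuous inverses whose union gives the required $\nu(z)$.

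Monotonicity of $\nu(z)^2$ is then immediate: $\nu(z)$ is increasing on $[-1,\infty)$, while $\nu_0(z)$ is decreasing on $(-\infty,-1]$ so that $\nu(z)^2 = -\nu_0(z)^2$ is increasing there as well, both branches meeting at $0$ when $z = -1$. For $z^2 + \nu(z)^2$, I would differentiate the implicit equation. On the real branch, substituting $z = -\nu\cot\nu$ and $d\nu/dz = 1/f_1'(\nu)$ and using the identity $\cos\nu \sin 2\nu = 2\sin\nu\cos^2\nu$, a direct simplification gives
\[
	\frac{d}{dz}\bigl(z^2 + \nu^2\bigr) = \frac{2\nu\,(\sin\nu - \nu \cos\nu)}{\sin\nu\,(\nu - \tfrac{1}{2}\sin 2\nu)}.
\]
The numerator is positive on $(0,\pi)$ since $\sin\nu - \nu\cos\nu$ vanishes at $0$ and has derivative $\nu\sin\nu > 0$, while the denominator is positive by the inequality already used. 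The hyperbolic branch is handled identically, the corresponding positivity reducing to $\nu_0\cosh\nu_0 - \sinh\nu_0 > 0$, which vanishes at $0$ with derivative $\nu_0\sinh\nu_0 > 0$.

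The main obstacle is the asymptotic as $z \to -\infty$. Writing $\nu = \ii\nu_0$, the implicit equation reads $|z| = \nu_0\coth\nu_0$, forcing $\nu_0 \to +\infty$. Expanding $\coth\nu_0 = 1 + 2\ee^{-2\nu_0}/(1-\ee^{-2\nu_0})$ gives $|z| - \nu_0 = 2\nu_0 \ee^{-2\nu_0}(1 + \BigO(\ee^{-2\nu_0}))$ and $|z| + \nu_0 = 2\nu_0(1 + \BigO(\ee^{-2\nu_0}))$, whence
\[
z^2 + \nu(z)^2 = (|z| - \nu_0)(|z| + \nu_0) = 4\nu_0^2\ee^{-2\nu_0}\bigl(1 + \BigO(\ee^{-2\nu_0})\bigr).
\]
The delicate step is replacing $\nu_0$ by $|z|$ on the right-hand side. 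Iterating the implicit equation yields $\nu_0 = |z|\bigl(1 - 2\ee^{-2\nu_0} + \BigO(\ee^{-4\nu_0})\bigr)$, and hence $\ee^{-2\nu_0} = \ee^{-2|z|}\bigl(1 + \BigO(|z|\ee^{-2|z|})\bigr)$; plugging these into the previous display produces \eqref{eq:diffop_r_asymptotic} once one is careful with the interpretation of the exponential remainder.
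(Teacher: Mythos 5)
Your proposal is correct and follows essentially the same approach as the paper: the same two-branch analysis via $-\nu\cot\nu$ and $-\eta\coth\eta$ with the same monotonicity computations, the same gluing at $z=-1$, and the same $\coth$ expansion for the asymptotic. The only cosmetic difference is that for the monotonicity of $z^2+\nu^2$ the paper first records the algebraic identity $z^2+\nu^2=(\nu/\sin\nu)^2$ (resp.\ $(\eta/\sinh\eta)^2$) and checks that the base function is monotone, which is exactly the content of your implicit differentiation; and your closing remark about the exponential remainder is well taken, since the correction when trading $\nu_0$ for $|z|$ is of size $O(|z|\ee^{-2|z|})$, a polynomial factor the paper silently absorbs into its $O(\ee^{-2|z|})$.
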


\begin{proof}
The function
\[
	f(\nu) = -\nu \cot \nu, \quad \nu \in ]0, \pi[
\]
is increasing, since its derivative is $f'(\nu) = (2\sin^2\nu)^{-1}(2\nu - \sin(2\nu))$, with limits
\[
	\lim_{\nu \to 0^+} f(\nu) = -1, \quad \lim_{\nu \to \pi^-} f(\nu) = +\infty.
\]
Similarly,
\[
	g(\eta) = f(\ii \eta) = -\eta \coth \eta, \quad \eta \in ]0, +\infty[
\]
is decreasing with
\[
	\lim_{\eta \to 0^+} g(\eta) = -1, \quad \lim_{\eta \to +\infty} g(\eta) = -\infty.
\]

These functions together allow us to define an implicit function $\nu : \Bbb{R} \to \ii]0, +\infty[ \cup [0, \pi[$ such that $\nu(-1) = 0$ and
\[
	f(\nu(z)) = z, \quad z \in \Bbb{R}\backslash \{-1\}.
\]
If $z \in ]-1, +\infty[$ there is a unique $\nu(z) \in ]0, \pi[$ satisfying $f(\nu(z)) = z$. If $z < -1$, there is a unique $\eta(z) \in ]0, +\infty[$ satisfying $g(\eta(z)) = z$, and we let $\nu(z) = \ii\eta(z).$ Hence, we have $g(\eta(z)) = f(\nu(z)) = z$. The value of $\nu(-1)$ ensures that $\nu(z)$ is continuous on all of $\Bbb{R}$.

Because $f:]0, \pi[ \to ]-1, +\infty[$ is increasing, $\nu(z)$ is an increasing function from $]-1, +\infty[$ to $]0, \pi[$, and $\nu(z)^2$ is likewise increasing on $]-1, \infty[$. For $z \in ]-\infty, -1[$, $\eta(z) = -\ii \nu(z)$ is decreasing from $]-\infty, -1[$ to $]0, +\infty[$, so $\nu(z)^2 = -\eta(z)^2$ is increasing. Since $\nu$ is continuous, $\nu(z)^2$ is increasing on $\Bbb{R}$.

As for $z^2 + \nu(z)^2$, if $z > -1$ then
\[
	z^2 + \nu^2 = (-\nu \cot \nu)^2 + \nu^2 = \left(\frac{\nu}{\sin\nu}\right)^2.
\]
The function $\nu/\sin\nu$ is increasing and positive on $]0, \pi[$, as can be seen from its derivative
\[
	\left(\frac{\nu}{\sin\nu}\right)' = \frac{1}{\sin \nu^2}\,(\sin \nu - \nu\cos \nu) > 0.
\]
Therefore $z^2 + \nu(z)^2$ is increasing on $]-1, +\infty[$, as an increasing function of an increasing function. If $z < -1$, then $\nu(z)^2 = -\eta(z)^2$ with $\eta(z)\in]0, +\infty[$ a decreasing function of $z$ satisfying $z = -\eta\coth \eta$. We compute similarly
\[
	z^2 + \nu^2 = \eta^2\coth^2\eta - \eta^2 = \left(\frac{\eta}{\sinh\eta}\right)^2,
\]
with $\eta / \sinh \eta$ on $]0, +\infty[$ a positive function which is decreasing (because its derivative is $(\tanh\eta - \eta)\coth^2\eta$). Therefore $z^2 + \nu(z)^2$ is increasing on $]-\infty, -1[$ as well as on $]-1, +\infty[$, which extends by continuity to all of $\Bbb{R}$.

As for the asymptotic behavior of $\nu(z)^2 + z^2$ as $z \to -\infty$, using that
\[
	\coth\eta = 1 + 2\ee^{-2\eta} + \BigO(\ee^{-4\eta})
\] 
as $\eta \to +\infty$, from $z = -\eta \coth \eta$ we obtain
\[
	\eta = -z\left(1 - 2\ee^{-2|z|} + \BigO(\ee^{-4|z|})\right).
\]
The claim \eqref{eq:diffop_r_asymptotic} immediately follows.
\end{proof}

\begin{proposition}\label{prop:diffop_svd}
Let $A$ be defined as in \eqref{eq:3.1}. Let $\nu(z)$ be defined as in Proposition  \ref{prop:diffop_nu}.
Then for $z = z_1 + \ii z_2$ with $z_1, z_2 \in \Bbb{R}$,
\[
	\|(A - z)^{-1}\| = (z_1^2 + \nu(z_1)^2)^{-1/2}.
\]
and for $\omega \in \Bbb{R}$, with $r(\omega)$ defined in Definition \ref{def:r}, we have
\begin{equation}\label{eq:diffop_r}
	r(\omega) = \sqrt{\omega^2 + \nu(\omega)^2}.
\end{equation}
\end{proposition}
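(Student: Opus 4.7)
The plan is to use unitary equivalence to reduce to real $z$, and then compute $\|(A - z_1)^{-1}\|$ as the inverse square root of the smallest eigenvalue of $(A-z_1)^*(A-z_1)$, matching that eigenvalue to the transcendental equation defining $\nu$.

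First, the gauge transformation $U_\alpha u(x) = \ee^{\ii\alpha x}u(x)$ is unitary on $L^2(]0,1[)$, preserves $D(A)$, and satisfies $U_\alpha^{-1} A U_\alpha = A + \ii\alpha$, as already noted in the text preceding this proposition. Therefore $\|(A - z)^{-1}\|$ depends only on $z_1 = \jvRe z$, and it suffices to establish the formula for $z = z_1 \in \Bbb{R}$. Since $(A - z_1)^{-1}$ is compact by the earlier lemma, so is $B^*B$ for $B = A - z_1$, and
\[
\|(A - z_1)^{-1}\|^{-2} = \min \sigma(B^*B).
\]

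A direct computation shows that $B^*B u = -u'' + z_1^2 u$ on the domain
\[
D(B^*B) = \{u \in H^2(]0,1[)\::\: u(1) = 0,\ u'(0) = z_1 u(0)\},
\]
the second boundary condition coming from the requirement $Bu \in D(A^*)$. The eigenvalue problem $-u'' + z_1^2 u = \lambda u$ under these conditions is then elementary: for $\lambda > z_1^2$, setting $\mu = \sqrt{\lambda - z_1^2}$ and applying the two boundary conditions to $\alpha\cos(\mu x) + \beta \sin(\mu x)$ yields $-\mu\cot\mu = z_1$; for $\lambda < z_1^2$, the analogous computation with $\cosh$ and $\sinh$ gives $-\mu\coth\mu = z_1$ with $\mu = \sqrt{z_1^2 - \lambda} > 0$. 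Invoking Proposition \ref{prop:diffop_nu}, the smallest such $\mu$ is $\nu(z_1)$ (real branch when $z_1 > -1$, imaginary branch when $z_1 < -1$, with the borderline $z_1 = -1$ handled by $\nu(-1) = 0$). The corresponding least eigenvalue is thus $z_1^2 + \nu(z_1)^2$ in all cases, proving $\|(A - z_1)^{-1}\|^{-2} = z_1^2 + \nu(z_1)^2$.

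The formula \eqref{eq:diffop_r} for $r(\omega)$ then follows from the definition of $r$ combined with the monotonicity of $z_1 \mapsto z_1^2 + \nu(z_1)^2$ already established in Proposition \ref{prop:diffop_nu}, which makes the supremum in $r(\omega)^{-1} = \sup_{\jvRe z > \omega} \|(z - A)^{-1}\|$ attained in the limit $z_1 \to \omega^+$. The only mild obstacle is to verify that higher branches of $-\mu\cot\mu = z_1$ (with $\mu \in ]k\pi, (k+1)\pi[$ for $k \geq 1$) contribute only larger eigenvalues $z_1^2 + \mu^2 > z_1^2 + \pi^2$ and hence do not alter the minimum; for $z_1 < -1$ this amounts to checking $z_1^2 + \nu(z_1)^2 < \pi^2$, which is clear from the asymptotic \eqref{eq:diffop_r_asymptotic} together with continuity and the values $z_1^2 + \nu(z_1)^2 = 1$ at $z_1 = -1$ and $\pi^2/4$ at $z_1 = 0$.
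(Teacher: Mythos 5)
Your proposal is correct and follows essentially the same route as the paper: reduce to real $z$ by the gauge $U_\alpha$, identify $\|(A - z_1)^{-1}\|^{-2}$ with the least eigenvalue of $B^*B$ for $B = A - z_1$ by solving the Robin eigenvalue problem, and then use the monotonicity from Proposition~\ref{prop:diffop_nu} to obtain $r(\omega)$. The one small difference is that the paper dismisses the higher branches via the Sturm--Liouville property (the ground state has no interior zeros, so $\nu < \pi$), whereas you compare eigenvalue magnitudes directly; both work, though note that what is actually needed for $z_1 < -1$ is the trivial inequality $\nu(z_1)^2 < \pi^2$ (automatic since $\nu(z_1)^2 < 0$ on the hyperbolic branch), rather than the stronger and misstated $z_1^2 + \nu(z_1)^2 < \pi^2$.
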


\begin{figure}
\centering
\includegraphics[width = .5\textwidth]{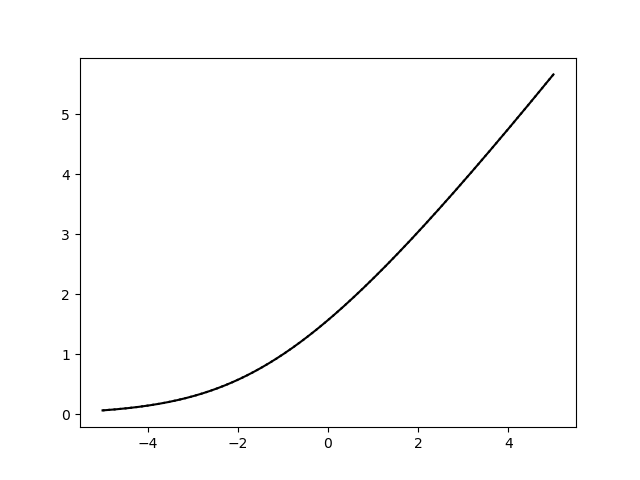}
\caption{Graph of $r(\omega)$ for $A$ in \eqref{eq:3.1}.}
\end{figure}

\begin{proof}
As in the case $z=0$, the proof is based on the property that, for $z\in \mathbb R$,  $1/ ||(A-z)^{-1}||$ is the square root of the smallest eigenvalue of the operator
\begin{equation}
B(z) := (A^* -z) (A-z) 
\end{equation}
whose domain reads
\begin{equation}
D(B(z)) =\{u \in H^2(0,1), u(1)=0, u'(0) - z u(0)=0\}\,.
\end{equation}
and is a realization of $-\frac{d^2}{dx^2}+ z^2$ on this domain. At the end we will be interested in the square root of the lowest eigenvalue of $B(z)$.

We first analyze the spectrum of 
$$
C(z):= B(z)-z^2 = -\frac{\dd^2}{\dd x^2}.
$$
It is rather standard to  determine the lowest eigenvalue as a function of $z\in \mathbb R$. If $(C(z) - \nu^2) \phi_\nu(x) = 0$ for $\nu > 0$ and if $\phi_\nu(1) = 0$, then up to constants
\begin{equation}\label{eq:diffop_svd_sin}
\phi_\nu(x)=\sin \nu (1-x)\,.
\end{equation}
Here $\nu$ is determined by the Robin condition at $0$:
$$
\nu  \cos \nu = - z \sin \nu\,.
$$
If $\nu (z)$ is a solution of this equation, the corresponding eigenvalue will be $\nu(z)^2$. We choose $\nu(z)$ such that this eigenvalue is minimal. By the Sturm-Liouville property $\phi_\nu(x)$ does not vanish in $]0,1[$, and therefore $\nu < \pi$. Since $\nu \cot \nu$ decreases from $1$ to $-\infty$ for $\nu \in [0, \pi[$, we conclude that there is a candidate $\phi_\nu$ for the eigenfunction of $C(z)$  with smallest eigenvalue if and only if $z \in ]-1, \infty[$.

If $C(z)\phi_0(x) = 0$, then up to constants $\phi_0(x) = 1-x$. This function satisfies the Robin condition if and only if $z = -1$.

The final case is $(C(z) + \eta^2) \psi_\eta(x) = 0$ for $\eta > 0$, which corresponds to $(C(z) - \nu^2) \psi_\eta(x) = 0$ when $\nu = \ii \eta$. In this case, again using $\psi_\eta(1) = 0$, we have up to constants
$$
\psi_\eta(x)=\sinh \eta (1-x)\,.
$$
Here $\eta$ is determined as above by the Robin condition at $0$:
$$
\nu  \cosh \eta = - z \sinh \eta\,.
$$
The function $\eta \coth \eta$ is increasing from $1$ to $+\infty$ for $\eta \in [0, +\infty[$, so we have a positive solution if and only if $z < -1$. The corresponding eigenvalue of $C(z)$ is $\nu = \ii \eta$ (up to sign).

In the three cases ($z > -1$, $z = -1$, and $z < -1$), we have determined the smallest eigenvalue $\nu$ of $C(z)$ (which has the same domain as $B(z)$). To obtain the smallest eigenvalue of $B(z)$, we add $z^2$, and to find the norm of the resolvent $\|(A - z)^{-1}\|$, we raise $z^2 + \nu^2$ to the power $-1/2$.

To show that $(z^2 + \nu^2)^{-1/2}$ is decreasing in $z$, it suffices to show that $\nu(z)$ is increasing in $z$. This follows from observing that $\nu \cot \nu$ is decreasing for $\nu \in ]0, \pi[$ (because $(\nu \cot \nu)' = \frac{1}{2}\sin 2\nu - \nu$ and $\frac{1}{2}\sin 2\nu < \nu$ when $\nu > 0$) and that $\eta \operatorname{coth} \eta$ is increasing for $\eta \in ]0, +\infty[$ (for similar reasons).
\end{proof}

\begin{remark}
From \eqref{eq:diffop_r_asymptotic} and \eqref{eq:diffop_r}, we see that
$$
 r(\omega)\sim 2 |\omega| e^{-|\omega|}\,, \quad \omega \to -\infty,
$$
as stated in Theorem 14.3 in \cite{He1}. With Theorem \ref{thm:sg_vanishes}, this confirms that $S(t) = 0$ for every $t \geq 1$.
\end{remark}

 As $z \rightarrow -\infty$ we get $\nu(z) \sim -z$, but the corresponding eigenvalue is $- \nu(z)^2$ and we are at the end interested in $-\nu(z)^2 + z^2$. If we observe that
 $$
 \nu(z) +z \sim - 2 |z| e^{-2 |z|} \mbox{as } z\rightarrow -\infty\,,
 $$
 we get that
 $$
 \nu(z)^2 + z^2 \sim  4  z^2  e^{-2 |z|} \,.
 $$
 and we get
 $$
 r(z)\sim 2 |z| e^{-|z|}\,,
 $$
 as stated in Theorem 14.3 in \cite{He1}. 

\begin{remark}
 On the line, we can consider the family of operators 
\[
	A_n = \frac{\dd}{\dd x} - x^{2n}.
\]
In the limit $n \to \infty$, the function $-x^{2n}$ becomes the negative square well
\[
	V_\infty(x) = \begin{cases} 0, & x \in ]-1, 1[, \\ -\infty, & |x| \geq 1.\end{cases}
\]
One can show that in the limit $n\rightarrow +\infty$ we recover up to a dilation the differentiation model. Explicit computations can be done for this model (see \cite{HV})
\end{remark}

\section{Combining iteration and the semigroup property}\label{s:semigroupization}

\subsection{Semigroup property for $m$} In this subsection we study whether an upper bound $m(t)$ can be improved through the elementary observation that, when $S(t)$ is a one-parameter semigroup, 
\[
	\|S(t_1+t_2)\| \leq \|S(t_1)\| \|S(t_2)\|\,.
\]

Suppose that
\begin{equation}\label{sgpm.1.5}
	m:\, [0,+\infty [\to ]0,+\infty [ \textnormal{ is continuous with } m(0) = 1.
\end{equation}
We assume also that
\begin{equation}\label{sgpm.1}
\| S(t) \|\le m(t),\ \ t\ge 0\,,
\end{equation}
where as usual $S(t)$ is a strongly continuous semigroup of operators with $S(0) = I$.
\subsubsection{Semigroupization}
If $t=t_1+t_2$, $t_j\ge 0$,
we have
$$
\| S(t) \|\le \| S(t_1)\| \| S(t_2)\| \le m(t_1)m(t_2),
$$
so
$$
\| S(t) \|\le \widetilde{m}_2(t),
$$
where
$$
\widetilde{m}_2(t)=\inf_{t_1+t_2=t}m(t_1)m(t_2),
$$
where it is understood that the $t_j$ are restricted to $[0,+\infty
[$. Since $m(0)=1$, we have $\widetilde{m}_2(t)\le m(t).$ Put
$\widetilde{m}_1=m$ and define for $N \ge 2$,
\begin{equation}\label{deftilde}
\widetilde{m}_N(t)=\inf_{t_1+\dots +t_N=t} m(t_1)m(t_2)\cdots m(t_N).
\end{equation}
The continuity of $m$ implies that $\widetilde m_N$ is continuous. 
Clearly, $\widetilde{m}_{N+1}\le \widetilde{m}_N$, and
$$
\| S(t) \|\le \widetilde{m}_N(t).
$$

\par If $N_1,\dots,N_M\in \{ 1,2,\dots \}$, $N=N_1+N_2+\dots+N_M$, then
\begin{equation}\label{sgpm.2}
\widetilde{m}_N(t)=\inf_{s_1+...+s_M=t} \widetilde{m}_{N_1}(s_1)\widetilde{m}_{N_2}(s_2)...\widetilde{m}_{N_M}(s_M).
\end{equation}
From this we get
\begin{proposition}\label{sgpm1}  If for some $N\in \{1,2,\dots \}$ we have $\widetilde{m}_{N+1}=\widetilde{m}_N$, then
$\widetilde{m}_M=\widetilde{m}_N$ for all $M>N$.
\end{proposition}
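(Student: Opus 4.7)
The plan is a straightforward induction on $M > N$, driven entirely by the composition identity \eqref{sgpm.2}. The base case $M = N+1$ is exactly the hypothesis, so the task reduces to the inductive step: assuming $\widetilde{m}_M = \widetilde{m}_N$ for some $M \geq N+1$, show that $\widetilde{m}_{M+1} = \widetilde{m}_N$.

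To carry this out, I would apply \eqref{sgpm.2} twice with the same ``split off one factor'' decomposition. First, with $N$ replaced by $M+1$ and the decomposition $M+1 = M + 1$ (so $N_1 = M$, $N_2 = 1$, noting $\widetilde{m}_1 = m$ by definition), one obtains
\[
	\widetilde{m}_{M+1}(t) = \inf_{s_1 + s_2 = t} \widetilde{m}_M(s_1)\, m(s_2).
\]
Second, with $N$ replaced by $N+1$ and the analogous decomposition $N+1 = N + 1$,
\[
	\widetilde{m}_{N+1}(t) = \inf_{s_1 + s_2 = t} \widetilde{m}_N(s_1)\, m(s_2).
\]
Under the inductive hypothesis $\widetilde{m}_M = \widetilde{m}_N$, the two right-hand sides coincide termwise, so $\widetilde{m}_{M+1} = \widetilde{m}_{N+1}$, and the latter equals $\widetilde{m}_N$ by the standing assumption. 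This closes the induction and yields $\widetilde{m}_M = \widetilde{m}_N$ for every $M > N$.

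There is essentially no obstacle once \eqref{sgpm.2} is in hand; the argument is a one-line manipulation of infima. The only point worth flagging is that the decomposition ``split off a single factor of $m$'' is forced upon us to make the two infima match, which is why the induction advances by one index at a time rather than, say, doubling.
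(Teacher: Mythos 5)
Your proof is correct and follows exactly the intended route: the paper states Proposition \ref{sgpm1} as an immediate consequence of the composition identity \eqref{sgpm.2} without spelling out the induction, and your ``split off one factor of $m$'' argument is the natural way to fill in that step.
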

\medskip
\par Recall that $N\mapsto \widetilde{m}_N$ is decreasing and define
\begin{equation}\label{sgpm.3}
\widetilde{m}_\infty:=\lim_{N\to \infty } \widetilde{m}_N.
\end{equation}
Since the $\widetilde m_N$'s are continuous, $\widetilde m_\infty$ is upper semi-continuous. 
By construction, $\| S(t) \|\le \widetilde{m}_\infty (t)$ if
(\ref{sgpm.1.5}) and (\ref{sgpm.1}) hold. Moreover,
\begin{equation}\label{sgpm.4}
  \widetilde{m}_\infty (t)=\inf_{s_1+...+s_M=t} \widetilde{m}_\infty (s_1)... \widetilde{m}_\infty (s_M),
\end{equation}

From now on we denote by $\mathfrak S$ the map associating with $m$ the function   $\widetilde{m}_\infty$ and note that
\begin{equation}
\mathfrak S \circ \mathfrak S = \mathfrak S\,.
\end{equation}
We say that $m$ is the $\mathfrak S$-invariant if 
$\mathfrak S (m)=m$. 

\begin{remark}
We can modify the definitions above by restricting $s_j$ to
certain finite $t$-dependent sets. For $N=1,2,2^2,2^3,...$, let
$\Gamma _N(t)= \mathbb N \frac{t}{N}$, where $\mathbb N=\{0,1,2,... \}$. For $t\ge
0$, define $\widehat{m}_1(t)=m(t)$,
$$
\widehat{m}_N(t)=\inf_{t_1,...,t_N\in \Gamma _N(t),\atop
  t_1+...+t_N=t} m(t_1)...m(t_N).
$$
Notice that
$$
\widehat{m}_2(t)=\min \left( m(t/2)^2,m(t) \right).
$$
Again, $N\mapsto \widehat{m}_N$ is decreasing, and if (\ref{sgpm.1.5}) and (\ref{sgpm.1}) hold, then $\| S(t) \|\le \widehat{m}_N(t)$.
\par Clearly, $\widehat{m}_N(t)\ge \widetilde{m}_N(t)$, so $$\lim_{N\to
\infty }
\widehat{m}_N(t)\ge \lim_{N\to \infty }\widetilde{m}_N(t) \,.
$$
 On the
other hand, due to the continuity of $m$, we see that for every
$N_0\in \mathbb N\setminus \{0 \}$,  $$ \lim_{N\to
\infty }
\widehat{m}_N(t)\le \widetilde{m}_{N_0}(t) \,,$$ hence
$$
\lim_{N\to
\infty }
\widehat{m}_N(t) = \lim_{N\to \infty }\widetilde{m}_N(t). 
$$
\end{remark}

\subsubsection{Semi-groupization in an interval}
We say that $m$ is $\mathfrak S$-invariant on $[0,T]$, if $m(t_1+t_2)\le m(t_1)m(t_2)$ for
  all $t_1,t_2\in [0,T]$ with $t_1+t_2\in [0,T]$. 
 \par We have
 \begin{equation}\label{dsg.2}
   \widetilde{m}_\infty (t)=\inf_{N\ge 1}
   \inf_{t_1,t_2,...,t_N\ge 0 \atop
t_1+...+t_N=t}\widetilde{m}_\infty(t_1)\widetilde{m}_\infty(t_2)...\widetilde{m}_\infty(t_N).
\end{equation}
In fact, it suffices to use the definition \eqref{deftilde}-(\ref{sgpm.3}) for each
factor $\widetilde{m}_\infty(t_j)$ in the right-hand side of (\ref{dsg.2})\,.

\par More generally, for $0<s\le t\le T$, we may put
\begin{equation}\label{dsg.3}
 \mathfrak S(m,s,t)=\inf_{N\ge 1}
\inf_{s\ge t_1,t_2,...,t_N\ge 0 \atop
t_1+...+t_N=t}m(t_1)m(t_2)...m(t_N).
\end{equation}
As above we check that
\begin{equation}\label{dsg.4}
\mathfrak S(\widetilde{m}_\infty ,s,t)=\mathfrak S(m,s,t).
\end{equation}

\subsubsection{Semi-groupization in a discrete setting} 

We now discuss the question of approximating $\mathfrak{S}(m)$ with a finite number of operations. Let
\[
	[[m, n]] = [m, n] \cap \Bbb{Z}
\]
and suppose that we wish to approximate $\mathfrak{S}(m)$ on the discretized half-line $h\Bbb{N}$. We would naturally define for all $t \in h\Bbb{N}$
\[
	\widetilde{m}_{\infty, h}(t) = \mathop{\inf_{t_1, \dots, t_K \in h\Bbb{N}\backslash\{0\}}}_{t_1 + \dots + t_K = t} m(t_1)m(t_2)\cdots m(t_K).
\]
Whether $\widetilde{m}_{\infty, h}(t) \to \widetilde{m}_{\infty}(t)$ as $h \to 0^+$ would depend of course on the continuity properties of $m$.

A direct approach to evaluating this formula would involve a number of terms equal to the partition function of $t/h$ (which grows exponentially rapidly in $\sqrt{t/h}$), but this is unnecessary. If $t_1, \dots, t_K \in h\Bbb{N}\backslash\{0\}$ with $t_1 + \dots + t_K = t$, we can assume without loss of generality that the $t_j$ are increasing and that therefore either $t_1 = t$ or $t_1 \leq t/2$. In the latter case,
\[
	m(t_1)m(t_2)\cdots m(t_K) \leq \widetilde{m}_{\infty, h}(t_1)\widetilde{m}_{\infty, h}(t-t_1),
\]
and the former case $t_1 = t$ formally corresponds to $t_1 = 0$ and $t_2 = t$. We therefore see that for any $s \in h\Bbb{N} \backslash\{0\}$,
\[
	\widetilde{m}_{\infty, h}(s) = \min\{m(s), \min\{\widetilde{m}_{\infty, h}(s_1)\widetilde{m}_{\infty, h}(s-s_1)\:: s_1 \in h\Bbb{N}, h \leq s_1 \leq s/2\}\}.
\]
For example, using that $m(0) = 1$, we compute 
$$
\begin{array}{ll}
\widetilde{m}_{\infty, h}(h) &= m(h), \\
\widetilde{m}_{\infty, h}(2h) &= \min\{m(2h), \widetilde{m}_{\infty, h}(h)^2\},\\
\widetilde{m}_{\infty, h}(3h) & = \min\{m(3h), \widetilde{m}_{\infty, h}(h)\widetilde{m}_{\infty, h}(2h)\}, \\

	\widetilde{m}_{\infty, h}(4h) &= \min\{m(4h), \widetilde{m}_{\infty, h}(h)\widetilde{m}_{\infty, h}(3h), \widetilde{m}_{\infty, h}(2h)^2\},
\end{array}
$$
and so on.\\
 When $t = Nh$, we therefore need at most $\sum_{k = 0}^{N} k/2 = \frac{1}{4}N(1+N)$ terms to compute $\widetilde{m}_{\infty, h}(t)$.

\subsection{An iteration scheme}

Let $m$ satisfy (\ref{sgpm.1.5}) and (\ref{sgpm.1}) and suppose in addition that $\log m(t)$ is piecewise differentiable. We wish to study iterating applications of Theorem \ref{thm:Riccati} and semigroupization. 

\begin{remark}
In order to apply Theorem \ref{thm:Riccati} to our iteration, we would need to assume that $\mathfrak{S}$ yields a continuous function, or we would need to develop a theory with weaker assumptions on $m$.
\end{remark}

To simplify notation, with $U(m, \omega, r)$ from \eqref{eq:thm_Riccati_U}, let
\[
	U_\omega(m)(t) = U(m, \omega, r)(t) = \begin{cases} 
	m(t), & 0 \leq t \leq 2a^*,
	\\
	\min\{m(t), \ee^{(\omega - r)(t-2a^*)}m(a^*)^2\}, & t > 2a^*.
	\end{cases}
\]
assuming that $r = r(\omega)$ is determined by $\omega$, as in Definition \ref{def:r}. The constant $a^* = a^*(m, \omega, r)$ is as in \eqref{eq:def_astar}. Examining \eqref{eq:def_phi}, it is clear that $a^*$ does not depend on the values of $m(t)$ for $t > a^*$.

If we have a nonempty set $\Omega \subset \Bbb{R}$ of values of $\omega$, we put
$$
\underline{U}_\Omega (m) = \inf_{\omega \in \Omega }U_\omega (m).
$$
\par We shall consider the iteration with a fixed set $\Omega \subset \mathbb R\,$, given by
\begin{equation}\label{iter.0}
m\mapsto (\mathfrak S \underline{U}_\Omega  )^km,\ k=1,2,3,...
\end{equation}
Observe that $\mathfrak S U_\omega $ and $ \mathfrak S\underline{U}_\Omega $ are decreasing
operations,
\begin{equation}\label{iter.1}
 \mathfrak S\underline{U}_\Omega m\le \mathfrak S U_\omega m\le m,\ \forall \omega \in
\Omega .
\end{equation}
Note that
\begin{equation}\label{eq:sginf}
\mathfrak S(\inf_{\nu \in \mathcal{N}}m_\nu )\le \inf_{\nu \in \mathcal{N}}\mathfrak S(m_\nu
), 
\end{equation}
if $\left( m_\nu\right)_{\nu \in \mathcal{N}}$ is a family of functions $[0,+\infty
[\to [0,+\infty  [$.
We define
$$\underline{a}^*_\Omega (m) =\inf_{\omega \in \Omega }a^*(m,\omega ).$$

Assume from now on that $\Omega \subset \mathbb R $ is finite and
non-empty. Let
\begin{equation}\label{iter.2}\underline{\Omega }(m)=\{\omega \in
  \Omega ;\, a^*(m,\omega )=\underline{a}^*_\Omega (m) \} .
\end{equation}
By definition, $U_\omega m = m$ on $[0, 2 a^*(m, \omega)]$. Therefore, if $m$ is $\mathfrak{S}$-invariant, then
\begin{equation}\label{iter.3}
\mathfrak S U_\omega m=m\hbox{ on }[0,2\underline{a}^*_\Omega (m)],\ \forall
\omega \in \Omega,
\end{equation}
so
$$
\mathfrak S\underline{U}_\Omega m=m\hbox{ on }[0,2\underline{a}^*_\Omega (m)].
$$
From (\ref{iter.3}) it follows that
\begin{equation}\label{iter.4}
a^*(\mathfrak S\underline{U}_\Omega m,\omega )\begin{cases}\ge
  \underline{a}^*_\Omega (m),\ \forall \omega \in \Omega ,\\
= \underline{a}^*_\Omega (m),\ \forall \omega \in \underline{\Omega }(m).
\end{cases}
\end{equation}

Let $\omega _0\in \underline{\Omega }(m)$. Then
\begin{equation}\label{iter.5}
  U_{\omega _0}m=\begin{cases}
    m\hbox{ on }[0,2\underline{a}_\Omega ^*(m)],\\
    \min \left( m,m(\underline{a}_\Omega ^*(m))^2 e^{(\omega _0-r(\omega
      _0))(t-2\underline{a}_\Omega ^*(m))}\right)\hbox{ on
    }]2\underline{a}_\Omega ^*(m),+\infty [.
  \end{cases}
\end{equation}
Here we observe that if $\omega \in \Omega $ and $\widetilde{m}:\,
[0,+\infty [\to [0,+\infty [$ satisfies
\begin{equation}\label{iter.6}
  \widetilde{m} \  \begin{cases}
    =m \hbox{ on }[0,2a^*(m,\omega )],\\
    \le U_\omega m\hbox{ on }]2a^*(m,\omega ),+\infty [ ,
  \end{cases}                                                            
\end{equation}
then $a^*(\widetilde{m},\omega )=a^*(m,\omega )$, so $U_\omega
\widetilde{m}$ is well defined with the same value of $a^*
$ as in the definition of $U_\omega m$, and we have
\begin{equation}\label{iter.7}
U_\omega \widetilde{m}=\widetilde{m}.
\end{equation}
If in addition $\mathfrak S\widetilde{m}=\widetilde{m}$, then
\begin{equation}\label{iter.8}
\mathfrak S U_\omega \widetilde{m}=\widetilde{m}.
\end{equation}

\par This can be applied to $\widetilde{m}=\mathfrak S U_{\omega _0}m$ which
satisfies (\ref{iter.6} with $\omega =\omega _0$, hence by
(\ref{iter.8}), we have
\begin{equation}\label{iter.9}
(\mathfrak S U_{\omega _0})^2m=\mathfrak S U_{\omega _0}m.
\end{equation}
A first conclusion is then:
\begin{proposition}\label{iter1}
Let $m:\, [0,+\infty [\to ]0,+\infty [$ satisfy $m(0)=1$ and
$\mathfrak S$-invariant. Let $\Omega =\{ \omega _0 \}$ contain a single
frequency. Then (\ref{iter.9}) holds, so the iteration $m\mapsto
(\mathfrak S U_{\omega _0})^km$, $k=1,2,...$ becomes stationary after the first
step ($k=1$):
$$(\mathfrak S U_{\omega _0})^km=\mathfrak S U_{\omega _0}m,\ \forall k\ge 1.$$
\end{proposition}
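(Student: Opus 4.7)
The plan is to combine the idempotency $\mathfrak{S} \circ \mathfrak{S} = \mathfrak{S}$ with the implications (\ref{iter.6})--(\ref{iter.8}) to derive (\ref{iter.9}), i.e.\ $(\mathfrak{S} U_{\omega_0})^2 m = \mathfrak{S} U_{\omega_0} m$; a trivial induction then extends this stationarity to all $k \ge 1$. Most of the work has actually been carried out in the paragraphs leading up to the proposition, so the proof amounts to assembling those ingredients cleanly.

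Setting $\widetilde{m} := \mathfrak{S} U_{\omega_0} m$, the main step is to verify that $\widetilde{m}$ satisfies (\ref{iter.6}) with $\omega = \omega_0$, together with the $\mathfrak{S}$-invariance $\mathfrak{S}\widetilde{m} = \widetilde{m}$. For the equality $\widetilde{m} = m$ on $[0, 2 a^*(m, \omega_0)]$, I would specialize (\ref{iter.3}) to the singleton $\Omega = \{\omega_0\}$, in which case $\underline{a}^*_{\Omega}(m) = a^*(m, \omega_0)$; the $\mathfrak{S}$-invariance hypothesis on $m$ then gives the claim directly. For the inequality $\widetilde{m} \le U_{\omega_0} m$ on $]\,2 a^*(m, \omega_0), +\infty[\,$, the global bound (\ref{iter.1}) suffices. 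The $\mathfrak{S}$-invariance $\mathfrak{S} \widetilde{m} = \widetilde{m}$ is immediate from $\mathfrak{S} \circ \mathfrak{S} = \mathfrak{S}$ applied to $U_{\omega_0} m$.

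With these two properties of $\widetilde{m}$ in hand, (\ref{iter.7}) gives $a^*(\widetilde{m}, \omega_0) = a^*(m, \omega_0)$ and $U_{\omega_0} \widetilde{m} = \widetilde{m}$, and then (\ref{iter.8}) yields $\mathfrak{S} U_{\omega_0} \widetilde{m} = \widetilde{m}$, which unwinds to $(\mathfrak{S} U_{\omega_0})^2 m = \mathfrak{S} U_{\omega_0} m$. For $k \ge 2$, writing $(\mathfrak{S} U_{\omega_0})^k m = (\mathfrak{S} U_{\omega_0})^{k-2}\bigl((\mathfrak{S} U_{\omega_0})^2 m\bigr) = (\mathfrak{S} U_{\omega_0})^{k-1} m$ and using the inductive hypothesis closes the argument.

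The main obstacle, which is algebraically trivial but conceptually the one place where care is needed, is the point flagged in the remark preceding the iteration scheme: $\mathfrak{S}$ only guarantees upper semicontinuity of $\widetilde{m}$, while Theorem \ref{thm:Riccati} is stated for $m$ continuous with piecewise continuous derivative. Justifying the application of $U_{\omega_0}$ to $\widetilde{m}$ therefore requires either a mild relaxation of the regularity hypotheses of the theorem, or a check that $\widetilde{m}$ inherits sufficient regularity from $U_{\omega_0} m$ so that the Riccati problem (\ref{eq:def_phi}) defines $a^*(\widetilde{m}, \omega_0)$ unambiguously; once this is granted, the proof is a purely formal consequence of (\ref{iter.3}) and (\ref{iter.6})--(\ref{iter.8}).
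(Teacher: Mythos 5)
Your argument reproduces the paper's own proof, which is exactly the paragraph preceding Proposition~\ref{iter1}: verify that $\widetilde m = \mathfrak S U_{\omega_0}m$ satisfies (\ref{iter.6}) and is $\mathfrak S$-invariant, then apply (\ref{iter.7})--(\ref{iter.8}) and induct on $k$. The only small imprecision is that (\ref{iter.1}) gives $\mathfrak S U_{\omega_0}m\le m$ rather than $\mathfrak S U_{\omega_0}m\le U_{\omega_0}m$, but the latter follows at once from the definition of $\mathfrak S$ since $(U_{\omega_0}m)(0)=m(0)=1$.
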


\par We now return to the general case with $\Omega $ finite and $m$
as above. Let $\omega _0\in \underline{\Omega }(m)$. Then
$\widetilde{m}:=\mathfrak S\underline{U}_\Omega m$ satisfies (\ref{iter.6}) with $\omega
=\omega _0$ and so does $(\mathfrak S\underline{U}_\Omega )^km$
for all $k\ge 1$. By (\ref{iter.8}), with $\omega =\omega _0$, we
then have
$$\mathfrak S U_{\omega _0}(\mathfrak S\underline{U}_\Omega )^km=(\mathfrak S\underline{U}_\Omega )^km.$$
This holds for all $\omega _0\in \underline{\Omega }(m)$, so if
$\underline{\Omega }(m)=\Omega $, then
$$
(\mathfrak S\underline{U}_\Omega )^{k+1}m=(\mathfrak S\underline{U }_\Omega)^k
m=\mathfrak S\underline{U }_\Omega m,\ \forall k\ge 1
$$
and the iteration procedure becomes stationary after the first step
($k=1$).

If $\underline{\Omega }(m)\ne \Omega $, then we get
\begin{equation}\label{iter.10}
(\mathfrak S\underline{U}_\Omega )^{k+1}m= (\mathfrak S\underline{U}_{\Omega\setminus
  \underline{\Omega }(m)} )^k \mathfrak S\underline{U}_\Omega m,\ \forall k\ge 0.
\end{equation}
This means that after the first step, we can continue the iteration
after replacing $(m,\Omega )$ with $(\widetilde{m},\widetilde{\Omega
})=(\mathfrak S\underline{U}_\Omega m,\Omega \setminus \underline{\Omega }(m))$
It follows that the iteration becomes stationary after at most
$\#\Omega $ steps.
\begin{proposition}\label{iter2}
Let $m:[0,+\infty [\to ]0,+\infty [$ satisfy $m(0)=1$ and 
$\mathfrak S$-invariant. Assume that $\Omega $ is finite. Then the iteration
$m\mapsto (\mathfrak S\underline{U}_\Omega )^km$ is stationary for $k\ge
\#\Omega $, i.e.
$$
(\mathfrak S\underline{U}_\Omega )^km=(\mathfrak S\underline{U}_\Omega )^{\# \Omega
}m,\hbox{ for }k\ge \#\Omega .
$$
\end{proposition}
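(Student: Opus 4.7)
The strategy is strong induction on $N = \#\Omega$, with the recursion (\ref{iter.10}) as the engine. That identity, already derived in the paragraph preceding the statement, says that whenever $\omega_0 \in \underline{\Omega}(m)$ the operator $\mathfrak S U_{\omega_0}$ acts as the identity on each iterate $(\mathfrak S\underline U_\Omega)^k m$, so after the first step the infimum defining $\underline U_\Omega$ only ``sees'' the reduced frequency set $\Omega \setminus \underline{\Omega}(m)$.

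The base case $N = 1$ is immediate: $\underline{\Omega}(m) = \Omega = \{\omega_0\}$ necessarily, and (\ref{iter.9}) (Proposition \ref{iter1}) yields stationarity after the first iterate. For the inductive step, assume the proposition for all parameter sets of cardinality strictly less than $N$ and take $\#\Omega = N$. If $\underline{\Omega}(m) = \Omega$, the stationarity discussion from Proposition \ref{iter1} applies verbatim and the iteration stabilizes at $k = 1 \le N$. Otherwise set $\widetilde m = \mathfrak S\underline U_\Omega m$ and $\widetilde \Omega = \Omega \setminus \underline{\Omega}(m)$, so that $\#\widetilde \Omega \le N-1$. Invoke (\ref{iter.10}) in the form
\[
(\mathfrak S\underline{U}_\Omega)^{k+1} m = (\mathfrak S\underline{U}_{\widetilde\Omega})^{k} \widetilde m, \qquad k \geq 0,
\]
and apply the induction hypothesis to $\widetilde m$ paired with $\widetilde \Omega$. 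The right-hand side is stationary for $k \ge \#\widetilde\Omega$, hence the left-hand side is stationary for $k+1 \ge 1 + \#\widetilde\Omega$, which is at most $N = \#\Omega$.

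The one point that must be verified before invoking the induction hypothesis is that $\widetilde m$ still satisfies the standing assumptions. That $\widetilde m$ is $\mathfrak S$-invariant follows from $\mathfrak S \circ \mathfrak S = \mathfrak S$; that $\widetilde m(0) = 1$ follows from (\ref{iter.3}) since $\widetilde m = m$ on $[0, 2\underline a^*_\Omega(m)]$ and $\underline a^*_\Omega(m) > 0$ (because $\phi(0) = 0$ and $\phi'(0) = r > 0$ in \eqref{eq:def_phi}). The subtlest structural requirement is continuity, needed to make $U_\omega$ well-defined at the next step via Theorem \ref{thm:Riccati}; this holds because $U_\omega m$ is continuous by inspection of \eqref{eq:thm_Riccati_U}, the finite infimum $\underline U_\Omega m$ inherits continuity, and $\mathfrak S$ preserves continuity on $\mathfrak S$-invariant continuous inputs as noted in Section \ref{s:semigroupization}.

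The main obstacle is therefore not analytic but conceptual: one has to recognize that (\ref{iter.10}) encodes exactly an induction on $\#\Omega$, where each application of $\mathfrak S\underline U_\Omega$ ``uses up'' at least one frequency (namely every element of $\underline{\Omega}(m)$), so after at most $\#\Omega$ steps the active frequency set is exhausted and the iteration can no longer decrease. The cleanest way to formalize this is the induction above, where the counter $\#\widetilde\Omega + 1 \le \#\Omega$ propagates through each level.
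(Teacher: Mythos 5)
Your proof is correct and takes essentially the same approach as the paper: the paper presents the argument informally as ``replace $(m,\Omega)$ by $(\widetilde m,\Omega\setminus\underline\Omega(m))$ and repeat,'' and you simply formalize this as a strong induction on $\#\Omega$ driven by \eqref{iter.10}, which is the intended reading. The extra verifications you supply (that $\widetilde m$ remains $\mathfrak S$-invariant via $\mathfrak S\circ\mathfrak S=\mathfrak S$, that $\widetilde m(0)=1$, and the continuity caveat the paper itself flags in a remark) are appropriate but do not change the structure of the argument.
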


\subsection{Preservation of log-concave upper bounds}

It is well-known that a concave function $f:[0, +\infty[ \to \Bbb{R}$ satisfying $f(0) \geq 0$ is subadditive, since, by writing $s= \frac{s}{s+t} (s+t) + \frac{t}{s+t} \cdot 0$ and similarly for $t$,
\[
	f(s) + f(t) = f(\frac{s}{s+t}(s+t)) + f(\frac{t}{s+t}(s+t)) \geq f(s+t). 
\]
Therefore whenever $\log m$ is concave, $m$ is $\mathfrak{S}$-invariant. We now show that this property is preserved by $U$, and one does not need to apply $\mathfrak{S}$ if one begins the iterations with a $\log$-concave function.

\begin{proposition}
If $m:[0, +\infty[ \to [0, +\infty[$ is log-concave, then for any $r \geq 0$ and $\omega \in \Bbb{R}$, the function $U(m, \omega, r)(t)$ in \eqref{eq:thm_Riccati_U} is also log-concave.
\end{proposition}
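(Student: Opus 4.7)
The plan is to show directly that $\log U(m,\omega,r)$ is concave on $[0,+\infty[$ by analyzing the two pieces in \eqref{eq:thm_Riccati_U} separately and then verifying the gluing at $t=2a^*$.

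On $[0,2a^*]$, $\log U=\log m$, which is concave by hypothesis. On $[2a^*,+\infty[$, setting $h(t):=\ee^{(\omega-r)(t-2a^*)}m(a^*)^2$, one has $\log U=\min\{\log m,\log h\}$; since $\log h$ is affine (hence concave) and $\log m$ is concave, $\log U$ is the pointwise minimum of two concave functions on that interval and therefore concave there as well.

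The delicate step is to check concavity across the gluing point $t=2a^*$, which for a continuous function amounts to verifying that the left derivative at $2a^*$ dominates the right derivative. First I would establish continuity at $2a^*$: log-concavity of $m$ applied at the midpoint $a^*=\tfrac{1}{2}\cdot 0+\tfrac{1}{2}\cdot 2a^*$ gives
\[
	2\log m(a^*)\geq \log m(0)+\log m(2a^*),
\]
so under the standing normalization $m(0)=1$ (which reflects $\|S(0)\|=1$ in the semigroup context of this section) one obtains $\log h(2a^*)\geq \log m(2a^*)$, and both pieces agree at $t=2a^*$ with common value $\log m(2a^*)$. Next, the left derivative of $\log U$ at $2a^*$ is $(\log m)'_-(2a^*)$, while the right derivative, by the standard rule for the one-sided derivative of a pointwise minimum of two concave functions, is either $(\log m)'_+(2a^*)$ when $\log h(2a^*)>\log m(2a^*)$ strictly, or $\min\{(\log m)'_+(2a^*),\,\omega-r\}$ in the case of equality. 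In both cases it is bounded above by $(\log m)'_+(2a^*)$, which is itself bounded above by $(\log m)'_-(2a^*)$ by global concavity of $\log m$. This gives concavity of $\log U$ across $t=2a^*$ and hence on all of $[0,+\infty[$.

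I expect the main obstacle to be precisely the continuity at $2a^*$: without the normalization $m(0)=1$, the midpoint inequality does not force $\log h(2a^*)\geq\log m(2a^*)$, and $\log U$ could then drop at $2a^*$ in a manner incompatible with global concavity. It is tempting to try to prove the cleaner statement that $U=\min\{m,h\}$ on all of $[0,+\infty[$, but this is false in general (already for $m\equiv 1$, $\omega=0$ one has $h(a^*)=\ee^{-\pi/4}<1=m(a^*)$), so the argument cannot avoid the piecewise gluing. All remaining verifications reduce to one-dimensional facts about concave functions and the behavior of their pointwise minima.
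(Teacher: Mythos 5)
Your proof is correct, but it takes a genuinely different route from the paper. You establish concavity piecewise and then verify the gluing at $t = 2a^*$ by showing that the left derivative of $\log U$ dominates the right derivative there. The paper instead chooses a supporting affine function of $\log m$ at $2a^*$ with some slope $\lambda$ (e.g.\ the right derivative of $\log m$) and splits into two cases: if $\lambda \leq \omega - r$, then $\log m \leq \ell$ on $[2a^*, +\infty[$ so that $U = m$ is trivially log-concave; if $\lambda \geq \omega - r$, then $\log m \leq \ell$ on $[0, 2a^*]$ as well, so $\log U = \min\{\log m, \ell\}$ on \emph{all} of $[0, +\infty[$ and concavity follows because the minimum of two concave functions is concave. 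The paper's version avoids any one-sided derivative computation at the junction; yours is more elementary but works just as well. You are also right to flag the need for $\log m(0) \geq 0$ (so that $\log m(2a^*) \leq 2\log m(a^*) = \log h(2a^*)$): this $\mathfrak{S}$-invariance hypothesis on $m$ is used without comment in the paper's proof (``by the semigroup property'') and is supplied by the surrounding discussion rather than by the proposition's statement.

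One small correction to your aside: for $m \equiv 1$, $\omega = 0$, $r = 1$ one has $h(a^*) = e^{(\omega - r)(a^* - 2a^*)} = e^{\pi/4} > 1$, not $e^{-\pi/4}$. In this example $U = \min\{m, h\}$ actually \emph{does} hold on all of $[0, +\infty[$ — that is precisely what the paper's second case establishes — so this example does not show the global identity failing. The global identity $\log U = \min\{\log m, \ell\}$ can fail only when $\lambda \leq \omega - r$, and in that regime the paper concludes $U = m$ directly. This slip does not affect the validity of your main argument.
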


\begin{proof}
If $a^* = +\infty$ then $U(m, \omega, r) = m$; the claim is trivially true in this case. We therefore suppose that $a^* < +\infty$.

Let
\[
	\ell(t) = 2\log m(a^*) + (\omega - r)(t - 2a^*),
\]
so that $\log U(m, \omega, r)(t) = \min\{\log m(t), \ell(t)\}$ for all $t \geq 2a^*$. By the semigroup property, $\log m(2a^*) \leq \ell(2a^*)$. By concavity of $\log m$ there exists some $\lambda \in \Bbb{R}$, for instance the derivative from the right of $\log m(t)$ at $2a^*$, such that 
\[
	\log m(t) \leq \log m(2a^*) + \lambda (t-2a^*), \quad \forall t \geq 0.
\]

If $\lambda \leq \omega - r$, then $\log m(t) \leq \ell(t)$ for all $t \geq 2a^*$. This implies that $U(m, \omega, r) = m$, so $U$ is again automatically log-concave. If $\lambda \geq \omega-r$, then for all $t \in [0, 2a^*]$, $\log m(t) \leq \ell(t)$. Therefore 
\[
	\log U(m, \omega, r)(t) = \min\{\log m(t), \ell(t)\} \mbox{ for all } t \geq 0\,.
\]
This implies that $\log U$ is concave since it is the minimum of two concave functions, which completes the proof.
\end{proof}

\section{Iterating Theorem \ref{thm:Riccati} when $\log m(t)$ is piecewise affine}\label{s:paffine}

\newcommand{\update}{U}
\newcommand{\mRiccati}{M}

In this section, we apply Theorem \ref{thm:Riccati} iteratively to upper bounds $m(t)$ such that $\log m(t)$ is piecewise affine and concave. The starting point is applying Theorem \ref{thm:Riccati} with $r(0) \leq 1$ to $m(t) = 1$, which gives the upper bound of D.~Wei in Theorem \ref{th3.2}.

\subsection{Solving the Riccati equation in an interval when $\mu$ is constant}\label{ss:mucst_general}

To begin, we solve equation \eqref{defPhi} for constant $\mu$. We will then consider the general case by translation and dilation. We begin by considering the regions in the $(\mu, \Phi)$-plane given by considering the autonomous differential equation
\[
	\begin{cases}
	\Phi'(b) = (\Phi(b)^2 + 2\mu \Phi(b) + 1),
	\\
	\Phi(0) = \Phi_0
	\end{cases}
\]
as in \eqref{defPhi}. In Proposition \ref{prop:phi_mucst} we record the explicit forms of solutions to this differential equation according to these regions with initial data $\Phi_0$.

In Figure \ref{fig:mu_Phi}, we plot the direction of $\Phi'$ as a function of $\mu$ and $\Phi$. One sees immediately that a solution with $\Phi_0 \geq 0$ will remain positive, and that the solution with $\Phi_0 = 0$ will arrive at $\Phi = 1$ in finite time if and only if $\mu > -1$.

\begin{figure}
\centering
\includegraphics[width = .6\textwidth]{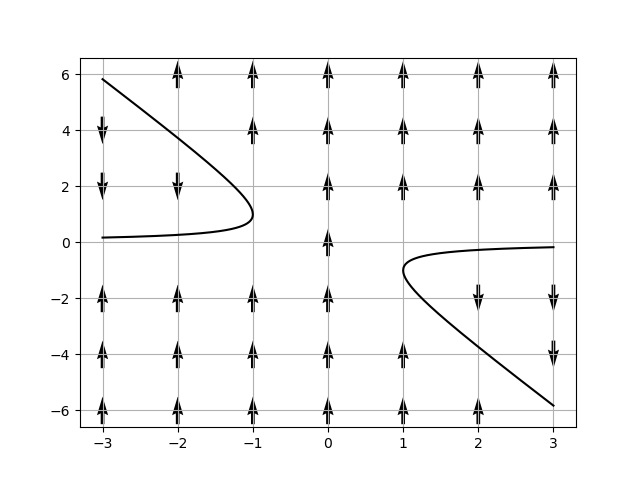}
\caption{\label{fig:mu_Phi} Plot of direction of $\Phi' = \Phi^2 + 2\mu \Phi + 1$ in the $(\mu, \Phi)$ plane.}
\end{figure}

\begin{proposition}\label{prop:phi_mucst}
For $\mu, \Phi_0 \in \Bbb{R}$, let
\[
	\Phi(b; \mu, \Phi_0), \quad b \in ]0, +\infty[
\]
be the solution to
\begin{equation}\label{eq:phi_mucst}
	\begin{cases}
	\Phi'(b) = \Phi(b)^2 + 2\mu \Phi(b) + 1, & b > 0,
	\\
	\Phi(0) = 0.
	\end{cases}
\end{equation}
\begin{enumerate}[(a)]
\item \label{it:phi_mucst_mul1} If $\mu^2 < 1$ then, with $\eta = \sqrt{1-\mu^2}$,
\[
	\Phi(b) = \eta \tan(\eta b + c)- \mu, \quad c = \operatorname{arctan} \frac{\Phi_0 + \mu}{\eta}.
\]
\item \label{it:phi_mucst_mue1} If $\mu^2 = 1$ then $\Phi(b)$ is constant if $\Phi_0 = -\mu$ and otherwise
\[
	\Phi(b) = -\frac{1}{b+c} - \mu, \quad c = -\frac{1}{\Phi_0 + \mu}.
\]
\item \label{it:psi_mucst_mug1} If $\mu^2 > 1$ then, with $\eta = \sqrt{\mu^2 - 1}$, then
\begin{enumerate}[(i)]
\item \label{it:psi_mucst_mug1_cst} if $|\Phi_0 + \mu| = \eta$, then $\Phi \equiv \Phi_0$ is constant;
\item \label{it:psi_mucst_mug1_coth} if $|\Phi_0 + \mu| > \eta$, then
\[
	\Phi(b) = \eta \coth(-\eta b + c) - \mu, \quad c = \operatorname{arccoth} \frac{\Phi_0 + \mu}{\eta};
\]
and
\item \label{it:psi_mucst_mug1_tanh} if $|\Phi_0 + \mu| < \eta$, then
\[
	\Phi(b) = \eta \tanh(-\eta b + c) - \mu, \quad c = \operatorname{arctanh}\frac{\Phi_0 + \mu}{\eta}.
\]
\end{enumerate}
\end{enumerate}
\end{proposition}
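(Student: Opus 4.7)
The plan is to reduce the Riccati equation to a separable ODE by the affine change of unknown $\Psi = \Phi + \mu$, which absorbs the linear term. Completing the square gives
\[
	\Phi^2 + 2\mu \Phi + 1 = (\Phi+\mu)^2 + (1-\mu^2),
\]
so $\Psi$ satisfies the autonomous equation $\Psi' = \Psi^2 - (\mu^2 - 1)$ with $\Psi(0) = \Phi_0 + \mu$. The three cases of the proposition correspond exactly to the sign of $\mu^2 - 1$, i.e.\ the presence or absence of real equilibria of this ODE. In each case the solution is obtained by separation of variables and then untangled back to $\Phi = \Psi - \mu$.

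In case \ref{it:phi_mucst_mul1}, with $\eta^2 = 1 - \mu^2 > 0$, I would integrate $d\Psi/(\Psi^2 + \eta^2) = db$, which gives $\eta^{-1}\arctan(\Psi/\eta) = b + \mathrm{const}$; solving for $\Psi$ and imposing $\Psi(0) = \Phi_0 + \mu$ yields the announced formula with $c = \arctan((\Phi_0+\mu)/\eta)$. In case \ref{it:phi_mucst_mue1}, with $\mu^2 = 1$, the equation collapses to $\Psi' = \Psi^2$; the equilibrium $\Psi \equiv 0$ corresponds to $\Phi_0 = -\mu$, and otherwise integrating $d\Psi/\Psi^2 = db$ gives $-1/\Psi = b+c$ with $c = -1/(\Phi_0+\mu)$.

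In case \ref{it:psi_mucst_mug1}, with $\eta^2 = \mu^2 - 1 > 0$, the autonomous equation $\Psi' = \Psi^2 - \eta^2$ has the two equilibria $\Psi = \pm \eta$, and the solution is constant precisely when $|\Phi_0 + \mu| = \eta$; this is case \ref{it:psi_mucst_mug1_cst}. Otherwise one uses the identities $\frac{d}{dw}\coth w = 1 - \coth^2 w$ and $\frac{d}{dw}\tanh w = 1 - \tanh^2 w$ to check that the ansatz $\Psi(b) = \eta \coth(-\eta b + c)$ (respectively $\Psi(b) = \eta \tanh(-\eta b + c)$) satisfies $\Psi' = \Psi^2 - \eta^2$ on its domain of definition. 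The initial condition $\Psi(0) = \Phi_0 + \mu$ forces $c = \operatorname{arccoth}((\Phi_0+\mu)/\eta)$ when $|\Phi_0+\mu|>\eta$, giving \ref{it:psi_mucst_mug1_coth}, and $c = \operatorname{arctanh}((\Phi_0+\mu)/\eta)$ when $|\Phi_0+\mu|<\eta$, giving \ref{it:psi_mucst_mug1_tanh}; the fact that the solution cannot cross an equilibrium value by uniqueness for the ODE guarantees that each of these three subcases is invariant as long as $\Psi$ is defined.

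There is essentially no obstacle here beyond bookkeeping: uniqueness of solutions to the first-order ODE ensures that the explicit formulas exhausted by the above case analysis are the only solutions. The only mildly delicate point is to verify that the choice of branch of $\arctan$, $\operatorname{arccoth}$ or $\operatorname{arctanh}$ is consistent with the sign conventions on the right-hand side (in particular the presence of $-\eta b$ inside $\coth$ and $\tanh$), but this is immediate from matching the initial value and differentiating once to compare with $\Phi^2 + 2\mu\Phi + 1$.
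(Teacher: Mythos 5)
Your proposal is correct; the paper records Proposition~\ref{prop:phi_mucst} without a proof, treating it as a routine ODE computation, and your derivation via the substitution $\Psi = \Phi + \mu$, which reduces the Riccati equation to the autonomous separable form $\Psi' = \Psi^2 - (\mu^2 - 1)$, is precisely the standard computation implicit in the statement. All the branch bookkeeping you flag is consistent: $\operatorname{arccoth}$ and $\operatorname{arctanh}$ are defined on the disjoint ranges $|x|>1$ and $|x|<1$, matching the subcases $|\Phi_0+\mu| \gtrless \eta$, and uniqueness for the first-order ODE prevents crossing the equilibria $\Psi = \pm\eta$ so each subcase is preserved on the interval of existence. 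One minor remark: the initial condition in \eqref{eq:phi_mucst} is written $\Phi(0)=0$, but the notation $\Phi(b;\mu,\Phi_0)$ and all the formulas use $\Phi(0)=\Phi_0$ (as required by \eqref{eq:phi_mucst_scaling}); you correctly worked with $\Phi(0)=\Phi_0$.
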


In order to apply Theorem \ref{thm:Riccati} to a pair $(\omega, r)$ and an upper bound $m(t)$ such that $\log m(t)$ is continuous and piecewise affine, we search for $b^* = b^*(\mu; \omega, r) \in ]0, +\infty]$ maximal such that $\Phi(b) < 1$ on $[0, b^*[$. To this end, we solve \eqref{defPhi} on the intervals on which $\log m(t)$ is affine. It is in principle possible that $\Phi$ tends to $+\infty$ on an interval where $|\mu| < 1$, but in this case $\Phi$ crosses $\Phi = 1$, so $\Phi$ will always be well-defined on $[0, b^*[$.

Suppose that with $0 = t_0 < t_1 < \dots < t_{N-1} < t_N = \infty$ and that 
\[
	\log m(t) = \alpha_j t + \beta_j, \quad t \in ]t_j, t_{j+1}[.
\]
Since we are assuming that $\log m(t)$ is concave, $\alpha_{j+1} < \alpha_j$ for each $j$.
Let
\begin{equation}\label{eq:muj_paffine}
	\mu_j = \frac{1}{r}(\alpha_j - \omega).
\end{equation}
Then on successive intervals $]t_j, t_{j+1}[$ for $j = 0, 1, 2, \dots$, we can compute the solution to \eqref{eq:def_phi} using Proposition \ref{prop:phi_mucst}:
\begin{equation}\label{eq:phi_mucst_scaling}
	\phi(t) = \Phi(r(t-t_j); \mu_j, \phi(t_j)), \quad t \in [t_j, t_{j+1}].
\end{equation}
To begin with $t_0 = 0$, we recall that $\phi(0) = 0$. As one can see from Figure \ref{fig:mu_Phi}, $\phi(t_j) \geq 0$ implies that $\phi(t_{j+1}) > 0$, so $\phi(t) \in ]0, 1[$ for all $t \in ]0, a^*[$.

The only case where $\Phi_0 < 1$ and $\Phi(t) = 1$ for some $t$ positive is when $\mu > -1$ (see Figure \ref{fig:mu_Phi}). On each interval $]t_j, t_{j+1}[$ where $\mu_j > -1$ and $\phi(t_j) < 1$, we can find a candidate $a_j^*$ for the first solution to $\phi(a) = 1$. 
\begin{enumerate}[(a)]
\item If $\mu_j \in ]-1, 1[$, writing $\eta_j = \sqrt{1 - \mu_j^2}$, we have the candidate
\begin{equation}\label{eq:ajstar1}
	a_j^* = t_j + \frac{1}{r\eta_j}\left(\operatorname{arctan} \frac{1+\mu_j}{\eta_j} - \operatorname{arctan}\frac{\phi(t_j) + \mu_j}{\eta_j}\right).
\end{equation}
\item If $\mu_j = 1$, we have the candidate
\begin{equation}\label{eq:ajstar2}
	a_j^* = t_j + \frac{1}{r}\left(\frac{1}{\phi(t_j) + 1} - \frac{1}{2}\right).
\end{equation}
\item If $\mu_j > 1$, writing $\eta_j = \sqrt{\mu_j^2 - 1}$ and recalling that $\phi \geq 0$, we have the candidate
\begin{equation}\label{eq:ajstar3}
	a_j^* = t_j + \frac{1}{r\eta_j}\left(\operatorname{arccoth} \frac{\phi(t_j) +\mu_j}{\eta_j} - \operatorname{arccoth}\frac{1 + \mu_j}{\eta_j}\right).
\end{equation}
\end{enumerate}

The hypothesis that $\log m(t)$ is concave is equivalent to supposing that $\{\mu_j\}_{j = 0}^{N-1}$ is a decreasing sequence. By Proposition \ref{prop:astardecr}, since we are considering $r$ fixed and since changing $\mu_j$ is the same as changing $\omega$ in \eqref{eq:def_mu},
\[
	a_{j+1}^* \geq a_j^*.
\]
We obtain that the first $t > 0$ such that $\phi(t) = 1$ is
\[
	a^* = \min\{a_j^* \::\: a_j^* \in ]t_j, t_{j+1}]\},
\]
with the convention that $a^* = +\infty$ if there is no such $a_j^*$. Note that, if for some $j$ we have $a_j^* > t_{j+1}$ and $\mu_{j+1} \leq -1$, then $a^*_{k} = +\infty$ for every $k > j$ which implies $a^* = +\infty$. 

\subsection{Application of Theorem \ref{thm:Riccati} when $m_0 \equiv 1$}\label{ss:m0is1}

We now perform some explicit computations for the constant function
\[
	m_0(t) = 1.
\]
This is the classical upper bound for $\|\exp(-tA)\|$ when $A$ is $m$-accretive. Of course, $\log m_0$ is piecewise affine and concave, so if we iteratively apply Theorem \ref{thm:Riccati} with varying values of $\omega$ and $r$, we obtain a sequence of upper bounds whose logarithms are piecewise affine and concave.

Given $\omega$ and $r = r(\omega)$, we have
\[
	a^*(m_0, \omega, r) = \frac{1}{r}a_0^*(\mu_0), \quad \mu_0 = -\frac{\omega}{r}.
\]
Then $a^* = a_0^*$ exists if and only if $\mu_0 > -1$, which is equivalent to $\omega - r < 0$. (This condition is natural because otherwise $\ee^{(\omega - r)(t - 2a^*)}m_0(a^*)^2$ could never be better than $m_0(t) = 1$ when $t > 2a^*$.)

As a reference case, if $\omega = 0$, then $\mu_0 = 0$ and by \eqref{eq:ajstar1},
\[
	a^* = \frac{\pi}{4r}.
\]
In this case,
\[
	U(m_0, 0, r)(t) = 
	\begin{cases} 1, & t \in ]0, \frac{\pi}{2r}[, 
	\\ \ee^{-rt + \frac{\pi}{2}}, & t \in ]\frac{\pi}{2r}, +\infty[,
	\end{cases}
\]
which is precisely Theorem \ref{th3.2}.

Using the case $\omega = 0, r = 1$ as a reference, we can examine how $a^*$ and $U(m_0, \omega, r)$ vary as $\omega$ and $r$ vary. 

To begin, if $0 < r \leq \omega$ then $a^* = +\infty$. This condition corresponds to the situation where $\ee^{(\omega - r)(t - 2a)} \geq 1$ whenever $t \geq 2a$, which means that Theorem \ref{thm:Riccati} could never give an improvement over $m_{\rm{old}} \equiv 1$ when $r \leq \omega$.

In the sector $\{(\omega, r) \::\: \omega > r > 0\},$ for $\omega$ fixed, $a^* \to +\infty$ as $r \to \max\{\omega, 0\}$ and $a^* \to 0$ as $r \to \infty$. Since $a^*(m_0, \omega, r)$ is decreasing in $r$ for $\omega$ fixed, for every $\alpha > 0$ we may define $r^* = r^*(\alpha, \omega)$ as the unique $r$ such that
\begin{equation}\label{eq:def_rstar}
	a^*(m_0, \omega, r^*(\alpha, \omega)) = \alpha.
\end{equation}
In Proposition \ref{prop:rstar_explicit} below we show that $r^*(\alpha, \omega)$ may be determined in terms of $r(\omega)$ for the derivative operator in \eqref{eq:3.1}.

In Figure \ref{fig:omegar} we show the boundary of the sector $\{(\omega, r) \::\: \omega > r > 0\}$, the solid curve $\{r = r^*(\frac{\pi}{4}, \omega)\}$, and the line $\{r = \omega + 1, r \geq 0\}$. For use in the next subsection, we include as dotted curves the graphs $\{r = r^*(\frac{\pi}{2}, \omega)\}$ and $\{r = r^*(\frac{\pi}{8}, \omega)\}$, which lie below and above the solid curve $\{r = r^*(\frac{\pi}{4}, \omega)\}$. 

\begin{figure}
\centering
\includegraphics[width = .6\textwidth]{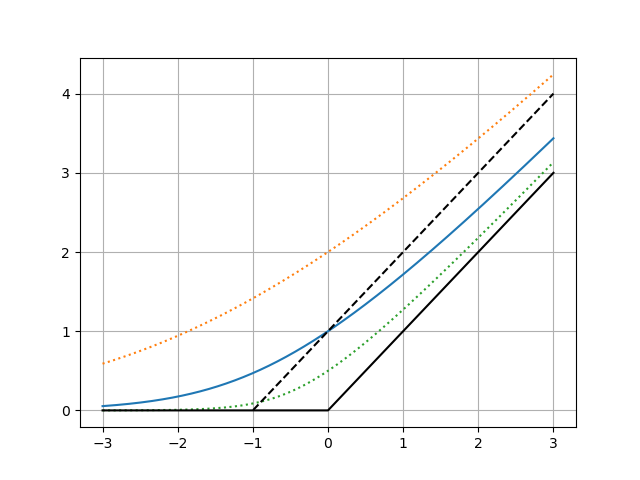}
\caption{\label{fig:omegar} Regions of $(\omega, r)$ where we can compare applications of Theorem \ref{thm:Riccati} with $m_0 \equiv 1$ to $\omega = 0$, $r = 1$.}
\end{figure}

Above the solid curve $\{(\omega, r^*(\frac{\pi}{4}, \omega)\}$, $a^* < \frac{\pi}{4}$, so the estimate $\ee^{(\omega - r)(t-2a^*)}$ for $t \geq 2a^*$ takes effect sooner than if $\omega = 0$ and $r = 1$ (and below the solid curve the estimate takes effect later). To the left of the line $\{r = \omega + 1\}$ the estimate $\ee^{(\omega - r)(t-2a^*)}$ decreases more rapidly than if $\omega = 0$ and $r = 1$, while to the right the estimate decreases more slowly.

We therefore have four regions that we can compare with the case $\omega = 0, r = 1$: below the solid curve $\{r = r^*(\frac{\pi}{4}, \omega)\}$ and to the right of the dashed line $\{r = \omega+1\}$, the estimate $\ee^{(\omega - r)(t-2a^*)}, t \geq 2a^*$ takes effect later and decreases more slowly than the reference case $\ee^{-(t-\frac{\pi}{2})}, t \geq \frac{\pi}{2}$. The estimate from Theorem \ref{thm:Riccati} with $(\omega, r)$ in this region is therefore everywhere larger than the estimate with $\omega = 0, r = 1$. Similarly, above the solid curve and to the left of the dashed line, the estimate from Theorem \ref{thm:Riccati} is everywhere smaller than the reference case. 

It is in the regions above the solid curve and to the right of the dashed line or below the solid curve and to the left of the dashed line, that is, $\{a^*(\omega, r) < \frac{\pi}{4}, r < \omega + 1\}$ and $\{a^*(\omega, r) > \frac{\pi}{4}, r > \omega + 1\}$ that combining Theorem \ref{thm:Riccati} with $(\omega, r)$ and with $\omega = 0, r = 1$ could give new information. We analyze this question in the next subsection.

We also have the following information on the resolvent of a hypothetical semigroup whose norms optimize the estimate in Theorem \ref{th3.2}.

\begin{proposition}
Suppose that $-A$ is a $m$-accretive operator on acting on a Hilbert space. If
\[
	\|\exp tA\| = 
	\begin{cases}
	1, & t \in [0, \frac{\pi}{2}],
	\\
	\exp(\frac{\pi}{2} - t), & t \in ]\frac{\pi}{2}, +\infty[,
	\end{cases}
\]
then
\[
	r(\omega) \leq \begin{cases} r^*(\frac{\pi}{4}, \omega), & \omega > 0,
	\\
	\omega + 1, & -1 < \omega \leq 0.
	\end{cases}
\]
\end{proposition}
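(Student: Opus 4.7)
The plan is to apply Theorem \ref{thm:Riccati} with the constant upper bound $m_0 \equiv 1$ (valid since $-A$ is $m$-accretive and hence $\|\exp(tA)\| \le 1$) together with $r = r(\omega)$, and then compare the resulting upper bound with the exact value of $\|\exp(tA)\|$ given by the hypothesis. In the case $r(\omega) > \omega$, Theorem \ref{thm:Riccati} yields a finite $a^* = a^*(m_0, \omega, r(\omega))$ and
\[
	\|\exp(tA)\| \le \ee^{(\omega - r(\omega))(t - 2a^*)}, \quad t \ge 2a^*.
\]

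The first step is to show that necessarily $a^* \ge \pi/4$. Arguing by contradiction, if $2a^* < \pi/2$ one could evaluate the bound at $t = \pi/2$ to obtain $1 = \|\exp(\tfrac{\pi}{2}A)\| \le \ee^{(\omega - r(\omega))(\pi/2 - 2a^*)}$, whose right-hand side is strictly less than $1$ since $\omega - r(\omega) < 0$ and $\pi/2 - 2a^* > 0$. By the monotonicity of $a^*$ as a decreasing function of $r$ (Proposition \ref{prop:astardecr}) together with the definition \eqref{eq:def_rstar} of $r^*$, the inequality $a^* \ge \pi/4$ is equivalent to $r(\omega) \le r^*(\pi/4, \omega)$, proving the claim for $\omega > 0$. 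The remaining case $r(\omega) \le \omega$ (only possible for $\omega > 0$) is automatic, since any $r$ with $a^*(m_0, \omega, r) < +\infty$ must satisfy $r > \omega$, hence $r^*(\pi/4, \omega) > \omega \ge r(\omega)$.

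For the range $-1 < \omega \le 0$, one always has $r(\omega) > 0 \ge \omega$, so the bound above is in force and by the first step $2a^* \ge \pi/2$. Substituting the exact value $\|\exp(tA)\| = \ee^{\pi/2 - t}$ for $t \ge \pi/2$ and taking logarithms yields
\[
	\pi/2 - t \le (\omega - r(\omega))(t - 2a^*), \quad t \ge 2a^*.
\]
Since both sides are affine in $t$, letting $t \to \infty$ forces a comparison of slopes $-1 \le \omega - r(\omega)$, i.e., $r(\omega) \le \omega + 1$, as claimed. The main subtlety in the argument is recognizing that different constraints become binding in different regimes of $\omega$: the ``short-time'' requirement that the Riccati decay cannot begin before $t = \pi/2$ produces the bound $r^*(\pi/4, \omega)$ used for $\omega > 0$, while the ``long-time'' slope comparison produces the bound $\omega + 1$ used for $-1 < \omega \le 0$.
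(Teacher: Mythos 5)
Your proof is correct. The paper states this proposition without giving a separate proof; it is offered as a corollary of the immediately preceding discussion of the four regions in the $(\omega, r)$-plane in Figure~\ref{fig:omegar}. Your argument makes that reasoning explicit and splits it cleanly into the two constraints implicit in that discussion: the short-time constraint (the Riccati-based decay cannot begin before $t = \pi/2$, which via Proposition~\ref{prop:astardecr} and \eqref{eq:def_rstar} is exactly $r(\omega) \le r^*(\tfrac{\pi}{4}, \omega)$) and the long-time slope constraint (the exponential rate $\omega - r(\omega)$ cannot be steeper than $-1$, i.e.\ $r(\omega) \le \omega + 1$). Both constraints in fact hold for every $\omega$ in the relevant range; the proposition records only the binding one in each regime. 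Your handling of the degenerate case $r(\omega) \le \omega$ (where $a^*$ is infinite and Theorem~\ref{thm:Riccati} is vacuous) is also correct and worth keeping, since the paper's prose does not address it. This is essentially the same approach as the paper's implicit one, carried out in full.
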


\subsection{Iterating from $m_0 \equiv 1$ with two $(\omega, r)$ pairs}

As a reference we take $m_0 \equiv 1$ and $(\omega_1, r_1) = (0, 1)$. As noted previously,
\[
	U(m_0, \omega_1, r_1) = m_1,
\]
where
\[
	\log m_1(t) = 
	\begin{cases} 
	0, & 0 \leq t \leq \frac{\pi}{2},
	\\
	\frac{\pi}{2} - t, & t > \frac{\pi}{2}.
	\end{cases}
\]
The question we consider here is whether iterating estimates from Theorem \ref{thm:Riccati} with $(\omega_1, r_1) = (0, 1)$ and some $(\omega_2, r_2)$ improves the estimates beyond simply taking the minimum  of the estimates obtained separately. Using the notation in \eqref{eq:thm_Riccati_U}, we would like to compare
\[
	\min\{\update(m_0, \omega_1, r_1), \update(m_0, \omega_2, r_2)\}
\]
with
\[
	\update(\update(m_0, \omega_1, r_1), \omega_2, r_2) = \update(m_1, \omega_2, r_2)
\]
or
\[
	\update(\update(m_0, \omega_2, r_2), \omega_1, r_1).
\]
Our answer is that there are situations where there is some improvement, but for relatively large $t$ and for small sets of pairs $(\omega_2, r_2)$.

If $r > \omega + 1$ and if $r > r^*(\frac{\pi}{4}, \omega)$, then $a^*(m_0, \omega, r) < \frac{\pi}{4}$ and $\omega - r < -1$. Therefore $a^*(m_1, \omega, r) = a^*(m_0, \omega, r)$ and $\update(m_1, \omega, r) = \update(m_0, \omega, r)$ is less than $m_1$ on $t \geq 2a^*(m_0, \omega, r)$. Consequently,
\[
	\update(m_0, \omega, r) = \update(m_1, \omega, r) = \min\{m_1, \update(m_0, \omega, r)\}.
\]
It is therefore $\update(m_0, \omega, r)$ which is everywhere stronger as an upper bound than $m_1$. For similar reasons, if $r < \omega + 1$ and if $r > r^*(\frac{\pi}{4}, \omega)$, then
\[
	m_1 = \update(\update(m_0, \omega, r), 0, 1) = \min\{m_1, \update(m_0, \omega, r)\},
\]
and it is $m_1$ which provides the smaller upper bound.

We next consider the case of those $(\omega_2, r_2)$ which satisfy
\[
	\max\{r^*(\frac{\pi}{2}, \omega_2), 1+\omega_2\} \leq r_2 \leq r^*(\frac{\pi}{4}, \omega_2).
\]
Note that this implies that $\omega \leq 0$. In this case $m_1 = \update(m_0, 0, 1)$ and $\update(m_1, 0, 1)$ are both constant and equal to $1$ on $[0, \frac{\pi}{4}]$. Consequently, the values of $a^*$ are unchanged (that is, $a^*(m_0, 1, 0) = a^*(\update(m_0, \omega_2, r_2), 1, 0)$ and $a^*(m_0, \omega_2, r_2) = a^*(m_1, \omega_2, r_2)$). Therefore
\begin{equation}\label{eq:logm_paffine_nonewinfo}
	\update(m_1, \omega_2, r_2) = \update(\update(m_0, \omega_2, r_2), 0, 1) = \min\{m_1, \update(m_0, \omega_2, r_2)\}.
\end{equation}
In other words, taking the minimum of the estimates obtained from $(\omega_1, r_1) = (0, 1)$ and $(\omega_2, r_2)$ gives a different and superior upper bound to the estimates taken separately, but iterating the procedure coming from Theorem \ref{thm:Riccati} gives no new information.

For the same reasons, if
\[
	r^*(\frac{\pi}{4}, \omega_2) \leq r_2 \leq \min\{\omega_2 + 1, r^*(\frac{\pi}{8}, \omega_2)\},
\]
(which implies $\omega \geq 1$) then \eqref{eq:logm_paffine_nonewinfo} holds as well.

The only cases in which iterating Theorem \ref{thm:Riccati} could provide a better estimate are when
\[
	\max\{0, \omega_2 + 1\} < r_2 < r^*(\frac{\pi}{2}, \omega_2)
\]
or when
\[
	r^*(\frac{\pi}{8}, \omega_2) < r_2 < \omega_2 + 1.
\]
This is a very restrictive set of $(\omega_2, r_2)$: for instance, the former requires that $\omega < -0.8891$ and the latter requires that $\omega > 4.7391$. Numerically it seems that, subject to these restrictions, one always has an improvement from iterating Theorem \ref{thm:Riccati}. However, this improvement is modest and applies to large $t$. We illustrate this phenomenon with an example.

\begin{example}
Let $(\omega_2, r_2) = (-1, 0.05)$. With $\mu = -\omega_2/r_2 = 20$ and $\eta = \sqrt{\mu^2 - 1} = \sqrt{399}$ we obtain
\begin{equation}\label{ex:twoitersa2}
\begin{aligned}
	a^* &= a^*(m_0, \omega_2, r_2) = \frac{1}{r_2\eta}\operatorname{arccoth}\frac{1 + \mu}{\eta} 
	\\ &= \frac{20}{\sqrt{399}}\left(\operatorname{arccoth}\frac{20}{\sqrt{399}} - \operatorname{arccoth}\sqrt{\frac{21}{19}}\right) 
	\\ &\approx 1.8464.
	\end{aligned}
\end{equation}
(The pair $(\omega_2, r_2)$ was chosen so that $a^*_2 > \frac{\pi}{2}$.) Therefore $\update(m_0, \omega_2, r_2) = m_2$ where
\[
	\log m_2(t) = 
	\begin{cases}
	0, & 0 \leq t \leq 2a^*,
	\\
	-1.05(t - 2a^*), & t > 2a^*.
	\end{cases}
\]
If we attempt to update $m_2$ using the data $(\omega_1, r_1) = (0, 1)$, we obtain $a^*(m_2, 0, 1) = a^*(m_0, 0, 1) = \frac{\pi}{4}$ (because $m_2 = 1$ on $[0, \frac{\pi}{4}]$). Therefore
\[
	m_{2, 1} = U(\omega_1, r_1, m_2)
\]
is simply
\[
	m_{2, 1}(t) = \min\{m_1(t), m_2(t)\}
	= \begin{cases}
	1, & 0 \leq t \leq \frac{\pi}{2},
	\\
	\ee^{\frac{\pi}{2} - t}, & \frac{\pi}{2} < t \leq t_2
	\\
	\ee^{-1.05(t-2a^*)}, & t > t_2.
	\end{cases}
\]
Here $t_2 \approx 46.1344$ is the point where $\frac{\pi}{2} - t = -1.05(t-2a^*_2)$.

In the reverse order, we can compute $\update(m_1, \omega_2, r_2)$. We begin by computing $\phi$ satisfying the Riccati equation 
\[
	\phi' = -r_2(\phi^2 + 2\mu \phi + 1),
\]
where, following \eqref{eq:muj_paffine},
\[
	\mu(t) = 
	\begin{cases}
	-\frac{\omega_2}{r_2} = 20, & 0 \leq t \leq \frac{\pi}{2},
	\\
	\frac{1}{r_2}(-1 - \omega_2) = 0, & t > \frac{\pi}{2}.
	\end{cases}
\]
In this case, in the notation of Proposition \ref{prop:phi_mucst},
\[
	\begin{aligned}
	\phi(\frac{\pi}{2}) 
	&= 
	\Phi(\frac{\pi}{2}r_2 ; 20, 0) 
	\\ &= 
	\sqrt{399}\operatorname{coth}\left(-\frac{\sqrt{399}}{20}\frac{\pi}{2} + \operatorname{arccoth} \frac{20}{\sqrt{399}}\right) - 20 
	\\ &\approx 
	0.5597.
	\end{aligned}
\]
Continuing on the interval $[t_1, t_2[ = [\frac{\pi}{2}, +\infty[$ where $\alpha_1 = -1$ and $\beta_1 = \frac{\pi}{2}$, we follow \eqref{eq:ajstar1} with 
\[
	\mu_1 = \frac{1}{r_2}(\alpha_1 - \omega_2) = \frac{1}{r}(-1 - (-1)) = 0
\]
and $\eta_1 = 1$. We obtain
\[
	a_1^* = a^*(m_1, \omega_2, r_2) = \frac{\pi}{2} + \frac{1}{r_2}\left(\arctan 1  - \operatorname{arctan}\phi(\frac{\pi}{2})\right) \approx 7.0741.
\]
This is a much larger than the previous value of $a^* \approx 1.8464$ obtained in \eqref{ex:twoitersa2}, but this is compensated for by the fact that $\log m_1(a_1^*) = \frac{\pi}{2} - \tilde{a}^* < 0$. Theorem \ref{thm:Riccati} gives the upper bound $\|S(t)\| \leq \exp M(t)$ for $t \geq 2a_1^*$ with
\[
	\begin{aligned}
	M(t) 
	&= 
	(\omega_2 - r_2)(t - 2a_1^*) + 2(\frac{\pi}{2} - a_1^*)
	\\ & \approx
	-1.05t + 3.8490
	\end{aligned}
\]
Because $3.8490 < 2(1.05)a^* \approx 3.8775$, this represents an improvement over $m_{2, 1} = \min\{m_1, m_2\}$. This improvement is only seen for very large $t$, since
\[
	(\omega_2 - r_2)(t - 2a_1^*) + 2(\frac{\pi}{2} - a_1^*) = \frac{\pi}{2} - t
\]
when $t \approx 45.5641$. 
\end{example}

\subsection{Two examples}\label{ss:paffine_examples}

\subsubsection{A Jordan block}

The algorithm described in Subsection \ref{ss:mucst_general} is sufficiently detailed to be implemented using a computer.

In Figure \ref{fig:Jordan_3} we compare upper bounds for $\|\exp(tJ)\|$ with 
\[
	J = \begin{pmatrix} 0 & 1 & 0 \\ 0 & 0 & 1 \\ 0 & 0 & 0 \end{pmatrix}
\]
a Jordan block of size $n = 3$ and $t \in [0, 20]$. (The norm is the operator norm induced by the usual Euclidean norm on $\Bbb{R}^3$.) The bottom curve is the true value of $\log \|\exp(tJ)\|$ with the norm computed as the largest singular value. 

\begin{enumerate}
\item The highest curve is the numerical range estimate
\[
	t \sup\{\jvRe z\::\: z \in \operatorname{Num}J\} = t \cos\frac{\pi}{n+1} = \frac{1}{\sqrt{2}}t
\]
which is an upper bound for $\log \|\exp(tJ)\|$. 
\item The second-highest curve is the logarithm of the upper bound coming from applying Theorem \ref{thm:Riccati} to the numerical range estimate three times, for $\omega \in \{\frac{1}{2}, 1, 2\}$. 
\item The second-lowest curve is the logarithm of the upper bound from applying the same theorem for the $101$ values
\[
	\omega = \exp(-5.0), \exp(-4.9), \exp(-4.8), \dots, \exp(5.0).
\]
\end{enumerate}

\begin{figure}
\centering
\includegraphics[width = .8\textwidth]{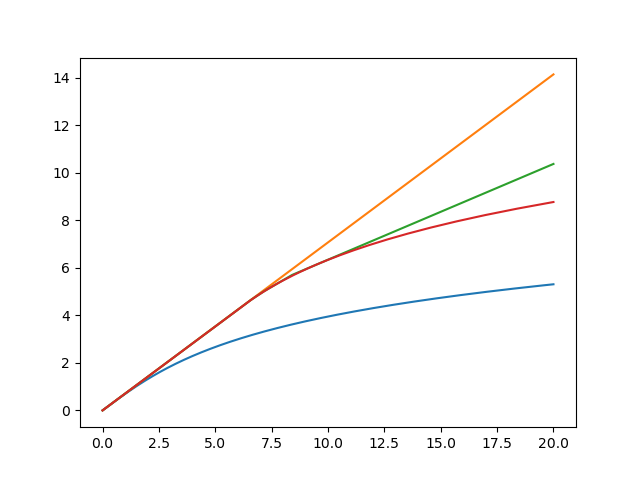}
\caption{\label{fig:Jordan_3} Graph of semigroup norms and upper bounds from Theorem \ref{thm:Riccati} for a Jordan block.}
\end{figure}

At each stage, the upper bounds for $\|\exp(tJ)\|$ improve, but it seems clear that the upper bound after an arbitrary number of applications of Theorem~\ref{thm:Riccati} will remain far from the true value of $\|\exp(tJ)\|$. It also seems to be the case that iterating Theorem \ref{thm:Riccati} does not improve the upper bounds. (We do not attempt to perform an exhaustive analysis of the exact values of $a^*$ and upper bounds in question.)

\subsubsection{The differentiation operator}

As a second example, we consider $A$ the differentiation operator on an interval from \eqref{eq:3.1}. We see that for this generator (and starting from the constant upper bound $m_0 = 1$ for the semigroup) the value of $a^*$ is constant making the estimate in \eqref{eq:thm_Riccati_U} optimal.

\begin{proposition}\label{prop:astar_diffop}
Let $r(\omega)$ be as in Proposition \ref{prop:diffop_svd}, and let $m_0 \equiv 1$ be the constant function. Then
\[
	a^*(m_0, \omega, r) = \frac{1}{2}, \quad \forall \omega \in \Bbb{R}.
\]
\end{proposition}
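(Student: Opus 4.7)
The plan is to substitute the explicit formula for $r(\omega)$ from Proposition \ref{prop:diffop_svd} into the three formulas \eqref{eq:ajstar1}--\eqref{eq:ajstar3} for $a^*$ (with $t_j = 0$, $\phi(t_j) = 0$), and reduce everything to elementary trigonometric or hyperbolic half-angle identities. Since $m_0 \equiv 1$ is affine with slope $\alpha_0 = 0$, we have throughout $\mu_0 = -\omega/r$, so the task is really to compute $\mu_0$ in the variable $\nu(\omega)$ of Proposition \ref{prop:diffop_nu} and recognize the resulting angles.

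First I treat the case $\omega > -1$, where $\nu=\nu(\omega)\in{}]0,\pi[$ satisfies $-\nu\cot\nu = \omega$. Then
\[
r^2 = \omega^2 + \nu^2 = \nu^2\cot^2\nu + \nu^2 = \nu^2/\sin^2\nu,
\]
so $r = \nu/\sin\nu$ and $\mu_0 = -\omega/r = \cos\nu \in {}]-1,1[$. Thus we are in case \eqref{it:phi_mucst_mul1} with $\tilde\eta := \sqrt{1-\mu_0^2} = \sin\nu$, and $r\tilde\eta = \nu$. Formula \eqref{eq:ajstar1} becomes
\[
a^* = \frac{1}{\nu}\Bigl(\arctan\tfrac{1+\cos\nu}{\sin\nu} - \arctan\tfrac{\cos\nu}{\sin\nu}\Bigr)
    = \frac{1}{\nu}\Bigl(\arctan\cot\tfrac{\nu}{2} - \arctan\cot\nu\Bigr),
\]
using $1+\cos\nu = 2\cos^2(\nu/2)$ and $\sin\nu = 2\sin(\nu/2)\cos(\nu/2)$. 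Since $\arctan\cot x = \tfrac{\pi}{2}-x$ for $x \in ]0,\pi[$, the bracket equals $\nu - \nu/2 = \nu/2$, giving $a^* = 1/2$.

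For $\omega = -1$ we have $\nu(-1) = 0$ and by continuity $r(-1) = 1$, so $\mu_0 = 1$ and \eqref{eq:ajstar2} gives $a^* = \tfrac{1}{r}(1 - \tfrac{1}{2}) = \tfrac{1}{2}$, matching the limit from both sides. For $\omega < -1$ the same mechanism works with hyperbolic functions: writing $\nu(\omega) = i\eta$ with $\eta > 0$ and $-\eta\coth\eta = \omega$, one gets $r = \eta/\sinh\eta$ and $\mu_0 = \cosh\eta > 1$, so we land in case \eqref{it:psi_mucst_mug1_coth} with $\tilde\eta = \sinh\eta$ and $r\tilde\eta = \eta$. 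Formula \eqref{eq:ajstar3} reads
\[
a^* = \frac{1}{\eta}\Bigl(\operatorname{arccoth}\coth\eta - \operatorname{arccoth}\tfrac{1+\cosh\eta}{\sinh\eta}\Bigr)
    = \frac{1}{\eta}\bigl(\eta - \tfrac{\eta}{2}\bigr) = \frac{1}{2},
\]
again using the half-angle identity $(1+\cosh\eta)/\sinh\eta = \coth(\eta/2)$.

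There is no real obstacle here once one sees the half-angle trick; the only bookkeeping is to verify that the three cases of Proposition \ref{prop:phi_mucst} match the three ranges of $\omega$ in Proposition \ref{prop:diffop_nu} and that the limits at $\omega = -1$ agree, which they do by the continuity of $\nu(\omega)$ and $r(\omega)$.
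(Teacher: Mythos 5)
Your proposal is correct and follows essentially the same route as the paper: express $r(\omega)=\nu/\sin\nu$ and $\mu_0=\cos\nu$ in terms of the implicit variable $\nu(\omega)$, then plug into \eqref{eq:ajstar1}--\eqref{eq:ajstar3}. You go a bit further than the published proof by spelling out the half-angle identities and the $\omega<-1$ case (which the paper omits as ``essentially the same''), but the underlying argument is identical.
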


\begin{proof}
We compute $a_0^*$ from \eqref{eq:ajstar1}--\eqref{eq:ajstar3} with $\mu_0 = -\omega/r(\omega)$, $t_0 = 0$ and $\phi(t_0) = 0$.  If $\omega = -1$ then $r(\omega) = 1$ and $a_0^* = \frac{1}{2}$ by \eqref{eq:ajstar2}. If $\omega > -1$ then $\nu = \nu(\omega) \in ]0, \pi[$ satisfies $-\nu \cot \nu = \omega$ and $\sin \nu > 0$. Consequently
\[
	r(\omega) = \sqrt{\nu^2 \cot \nu^2 + \nu^2} = \frac{\nu}{\sin \nu},
\]
and
\[
	\mu_0 = -\frac{\omega}{r(\omega)} = \frac{\nu \cot \nu}{\nu / \sin \nu} = \cos \nu.
\]
In \eqref{eq:ajstar1}, $\eta_0 = \sqrt{1-\mu_0^2} = \sin \nu$ and
\[
	a_0^* 
	= 
	\frac{1}{(\nu/\sin \nu)\sin \nu}\left(\arctan\frac{1 + \cos \nu}{\sin \nu} - \arctan\frac{\cos \nu}{\sin \nu}\right)
	=
	\frac{1}{2}.
\]
We omit the computation when $\omega < -1$ which is essentially the same.
\end{proof}

We note that, with $r(\omega)$ from Proposition \ref{prop:diffop_svd}, $\omega - r(\omega) < 0$ for all $\omega \in \Bbb{R}$. This follows from computing
\[
	\omega - r(\omega) = -\nu \cot \nu - \frac{\nu}{\sin \nu} = -\nu\frac{\cos\nu + 1}{\sin \nu} = -\nu \cot \frac{\nu}{2}
\]
for $\nu = \nu(\omega) \in \ii ]0, \infty[ \: \cup \: [0, \pi[$ extended by continuity at $\nu(-1) = 0$ to be $-1 - r(-1) = -2$. If $\nu \in ]0, \pi[$ then clearly $-\nu\cot(\nu/2) < 0$, and if $\nu \in \ii]0, \infty[$ then
\[
	-\nu \cot \frac{\nu}{2} = -(\nu/\ii) \coth(\nu/\ii) < 0
\]
as well.

Recall that the semigroup generated by $A$ satisfies $\|S(t)\| = 1$ for $t \in [0, 1[$ and $\|S(t)\| = 0$ for $t > 1$. Therefore the estimate in \eqref{eq:thm_Riccati_U} is optimal in the sense that
\[
	m_0(a^*)\ee^{(\omega - r(\omega))(t - 2a^*)} = \ee^{(\omega - r(\omega)(t - 1)}
\]
is the smallest function of the form
\[
	f_M(t) = M\ee^{(\omega - r(\omega))t}
\]
such that $\|S(t)\| \leq f_M(t)$ for all $t \geq 0$.

We also observe that, in the limit $\omega \to -\infty$, Theorem \ref{thm:Riccati} gives the exact value of $\|S(t)\|$ almost everywhere. Indeed, using \eqref{eq:thm_Riccati_U} and knowing that $\omega - r(\omega) \to -\infty$ as $\omega \to -\infty$,
\[
	\begin{aligned}
	\lim_{\omega \to -\infty} U(m_0, \omega, r(\omega))(t) 
	&= 
	\lim_{\omega \to -\infty} \begin{cases} 1, & 0 \leq t \leq 1,
	\\
	\ee^{(\omega - r(\omega))(t-1)}, & t > 1
	\end{cases}
	\\ &= 
	\begin{cases} 1, & 0 \leq t \leq 1,
	\\ 0, & t > 1.
	\end{cases}
	\end{aligned}
\]

This optimality is preserved under scaling. Continuing to let $A$ denote the operator in \eqref{eq:3.1} and with $\gamma > 0$ and $\delta \in \Bbb{R}$, let
\[
	r(\omega, \gamma A + \delta) = \left(\sup_{\jvRe z > \omega} \|(z-(\gamma A + \delta))^{-1}\|\right)^{-1}
\]
as in Definition \ref{def:r}. It is straightforward to see that, when
\[
	\omega' = \frac{\omega-\delta}{\gamma},
\]
\[
	r(\omega, \gamma A + \delta) = \gamma r(\omega', A).
\]
Moreover, the numerical range of $\gamma A + \delta$ is contained in $\{\jvRe z \leq \delta\}$ because $-A$ is $m$-accretive, so 
\[
	\exp(t(\gamma A + \delta)) \leq m_\delta(t) := \ee^{\delta t}, \quad \forall t \geq 0.
\]
By scaling the semigroup generated by $A$, we have
\[
	\|\exp(t(\gamma A + \delta))\| =
	\begin{cases}
	\ee^{\delta t}, & 0 \leq t < \frac{1}{\alpha},
	\\
	0 & t \geq \frac{1}{\alpha}.
	\end{cases}
\]

Applying Theorem \ref{thm:Riccati} to $\gamma A + \delta$ with upper bound $m_\delta$ then leads us to compute
\[
	a^*(m_\delta, \omega, r(\omega, \gamma A + \delta)) 
	= 
	a^*(m_\delta, \omega, \gamma r(\omega', A)).
\]
The parameter $\mu_0$ given by \eqref{eq:muj_paffine}, using that $\log m_\delta = \delta t$, is
\[
	\frac{1}{\gamma r(\omega', A)}(\delta - \omega) = - \frac{\omega'}{r(\omega', A)}
\]
In Proposition \ref{prop:astar_diffop} (cf.\ \eqref{eq:phi_mucst_scaling} with $j = 0$), we have seen that the first zero of
\[
	t \mapsto \Phi(r(\omega', A) t, \mu_0, 0) - 1
\]
is at $t = \frac{1}{2}$. Therefore the first zero of
\[
	t \mapsto \Phi(r(\omega, \gamma A + \delta)t, \mu_0, 0) - 1
\]
is at
\[
	t = \frac{1}{2}\left(\frac{r(\omega', A)}{r(\omega, \gamma A + \delta)}\right) = \frac{1}{2\gamma}.
\]

In conclusion, for every $\omega \in \Bbb{R}$,
\[
	a^*(m_\delta, \omega, r(\omega, \gamma A + \delta)) = \frac{1}{2\gamma}.
\]
The time $t = 2a^* = \frac{1}{\gamma}$ coincides with the $t$ beyond which $\exp(t(\gamma A + \delta)) = 0$, just as Proposition \ref{prop:astar_diffop} shows in the case $\gamma = 1$ and $\delta = 0$.

Taking the case $\delta = 0$ and $\gamma = \frac{1}{2\alpha}$, one readily obtains the following formula for $r^*(\alpha, \omega)$ from \eqref{eq:def_rstar}.

\begin{proposition}\label{prop:rstar_explicit}
Let $r(\omega)$ be as in Proposition \ref{prop:diffop_svd} and, for $\alpha > 0$ let $r^*(\alpha, \omega)$ solve
\[
	a^*(m_0, \omega, r^*(\alpha, \omega)) = \alpha
\]
with $m_0 \equiv 1$ as in \eqref{eq:def_rstar}. Then
\[
	r^*(\omega, \alpha) = \frac{1}{2\alpha} r(2\alpha \omega).
\]
\end{proposition}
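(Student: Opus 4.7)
My plan is to extract the formula by specializing the scaling analysis developed in the paragraphs immediately preceding the statement. The key observation is that $a^*(m_0,\omega,r)$ depends only on the scalar parameters $\omega$ and $r$ (through $\mu_0=-\omega/r$) and not on which operator generated them; therefore, if one can exhibit a concrete family of operators for which $a^*$ is known explicitly, one can recover $r^*(\alpha,\omega)$ by matching parameters.

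First, I would invoke the identity
\[
	r(\omega,\gamma A+\delta) = \gamma\, r\!\left(\tfrac{\omega-\delta}{\gamma},A\right),
\]
already established, together with the calculation showing that $a^*(m_\delta,\omega,r(\omega,\gamma A+\delta)) = \tfrac{1}{2\gamma}$ for every $\omega\in\mathbb{R}$. In the present setting I take $\delta=0$, so $m_\delta=m_0\equiv 1$ and the second relation becomes
\[
	a^*\!\left(m_0,\omega,\gamma\, r(\omega/\gamma,A)\right) = \tfrac{1}{2\gamma}.
\]

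Next, I choose $\gamma = \tfrac{1}{2\alpha}$ so that the right-hand side equals $\alpha$. Substituting, one obtains
\[
	a^*\!\left(m_0,\omega,\tfrac{1}{2\alpha} r(2\alpha\omega,A)\right) = \alpha.
\]
By the defining property \eqref{eq:def_rstar} of $r^*(\alpha,\omega)$ and the fact (from Proposition \ref{prop:astardecr}) that $a^*(m_0,\omega,\cdot)$ is strictly monotone in its third argument on the sector where it is finite, the value of $r$ producing $a^*=\alpha$ is unique; hence
\[
	r^*(\alpha,\omega) = \tfrac{1}{2\alpha}\, r(2\alpha\omega,A),
\]
which is the asserted formula.

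The main (and only) subtlety is to ensure that Proposition \ref{prop:astar_diffop}'s conclusion $a^*=\tfrac{1}{2}$ is indeed invariant under the scaling $A\mapsto \gamma A$, which amounts to checking that the change of variables $\tilde{t}=rt$ applied to the Riccati equation \eqref{eq:def_phi} converts the identity $a^*=\tfrac{1}{2}$ at the base operator $A$ into $a^*=\tfrac{1}{2\gamma}$ at $\gamma A$; this is what the scaling formula \eqref{eq:phi_mucst_scaling} encodes, and it is the only step that requires any verification. Once this is in hand, uniqueness of $r^*$ closes the argument.
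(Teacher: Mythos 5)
Your argument is correct and follows the same route as the paper: specialize the scaling identities $r(\omega,\gamma A+\delta)=\gamma\,r((\omega-\delta)/\gamma,A)$ and $a^*(m_\delta,\omega,r(\omega,\gamma A+\delta))=\tfrac{1}{2\gamma}$ to $\delta=0$, $\gamma=\tfrac{1}{2\alpha}$, then invoke the defining property of $r^*$. Your explicit appeal to monotonicity (Proposition \ref{prop:astardecr}) to justify uniqueness of $r^*$ is a welcome addition that the paper leaves implicit.
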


\section{Concluding remarks}

We have seen that Theorem \ref{th3.2} of D.~Wei and Theorem \ref{thm:Riccati} of the first two authors of the current work are optimal when applied to the differentiation operator on an interval in \eqref{eq:3.1}. The authors are not aware, however, of an example showing that the upper bound in Theorem \ref{th3.2} is optimal for $t > \frac{\pi}{2r(0)}$.

There are also simple examples (such as semigroups generated by Jordan blocks) such that the upper bounds of Theorem \ref{thm:Riccati} are not optimal. In certain situations, using the semigroup property for norms (Section \ref{s:semigroupization}) and iteration (Sections \ref{s:semigroupization} and \ref{s:paffine}) can offer some modest improvements, but a significant gap remains. The authors would naturally be interested in results which narrow this gap, either by improving Theorem \ref{thm:Riccati} or finding examples with large semigroup norms (and appropriate bounds on the resolvents of generators).

\end{document}